\newtheorem{proposition}{Proposition}[section]
\newtheorem{remark}{Remark}[section]
\newtheorem{lemma}{Lemma}[section] 
\numberwithin{equation}{section}
\newtheorem{theorem}{Theorem}[section] 
\newcommand{\h}{\hspace*{.24in}}
\begin{document}

\title{A Parallel Four Step Domain Decomposition Scheme for Coupled Forward Backward Stochastic Differential Equations}
\author{Minh-Binh TRAN\\
Laboratoire Analyse G\'eom\'etrie et Applications\\
Institut Galil\'ee, Universit\'e Paris 13, France\\
Email: binh@math.univ-paris13.fr}
\maketitle
\begin{abstract} Motivated by the idea of imposing paralleling computing on solving stochastic differential equations (SDEs), we introduce a new Domain Decomposition Scheme to solve forward-backward stochastic differential equations (FBSDEs) parallely. We reconstruct the Four Step Scheme in \cite{MaProtterYong:1994:SFB} with some different conditions and then associate it with the idea of Domain Decomposition Methods. We also introduce a new technique to prove the convergence of Domain Decomposition Methods for systems of quasilinear parabolic equations and use it to prove the convergence of our scheme for the FBSDEs. 
\end{abstract}
\section{Introduction}
\h The theory of forward-backward stochastic differential equations (FBSDEs) is a very active field of research since the first work of Pardoux and Peng \cite{PardouxPeng:1992:BSD} and Antonelli \cite{Antonelli:1993:BFS} came out in the early 1990s. These equations appear in a large number of application fields in stochastic control and financial mathematics. We refer to the monograph \cite{KarouiPengQuenez:1997:BSD}, \cite{Pham:2010:PDE} for details, further development and applications. Such systems strongly couple a forward stochastic differential equation with a backward one; and they can be written as a kind of stochastic two-point boundary value problems
\begin{equation}
\label{1e1}
\left \{ \begin{array}{ll}dX_t=b(t,X_t,Y_t)dt+\sigma(t,X_t,Y_t)dW_t,\vspace{.1in}\\ 
dY_t=-\Hat{b}(t,X_t,Y_t)dt-\Hat{\sigma}(t,X_t,Y_t)dW_t,\vspace{.1in}\\ 
X_0=x,Y_T=g(X_T).\end{array}\right. 
\end{equation}
Together with the theoretical studies on the systems (see \cite{Antonelli:1993:BFS}, \cite{Delarue:2002:OEU}, \cite{DelarueMenozzi:2006:AFB}, \cite{DelarueGuatteri:2006:WEU}, \cite{MaProtterYong:1994:SFB}, \cite{MaYong:1995:SFB}, \cite{MaYong:1999:FBS}, \cite{PardouxPeng:1992:BSD}), finding an efficient numerical scheme for FBSDEs has also become an important part of the theory. In order to solve a system of FBSDEs, we need to use the "decoupling PDE" technique, based on the so-called four step scheme (see \cite{MaProtterYong:1994:SFB}, \cite{MaYong:1995:SFB}, \cite{MaYong:1999:FBS}).  In which, the system of FBSDEs is associated with a quasilinear parabolic system of the following type
\begin{equation}
\begin{split}
\label{1e2}
\left \{ \begin{array}{ll}\frac{\partial \theta}{\partial t}+\sum_{i,j=1}^na_{i,j}\frac{\partial^2 \theta}{\partial x_i\partial x_j}+<{\nabla\theta},b(t,x,\theta)>+\hat{b}(t,x,\theta)=0,\mbox{ in } (0,T)\times\mathbb{R}^n,\vspace{.1in}\\ 
\theta(T,x)=g(x),\mbox{ on } \mathbb{R}^n,\end{array}\right. 
\end{split}
\end{equation}
where $\theta(t,x)$ is a vector of $m$ components $\theta=(\theta^1,\dots,\theta^m)$, $m\in\mathbb{N}$.
\\ From here, there are two directions to solve FBSDEs. The first trend is to solve FBSDEs by using the decoupling technique combining with some probability methods to avoid treating the PDEs directly (see \cite{BenderZhang:2008:TDM}, \cite{CvitanicZhang:2005:TSD}, \cite{DelarueMenozzi:2006:AFB}, \cite{MilsteinTretyakov:2006:NAF}). The second trend is to solve directly the PDEs. The first paper in this direction is the one of Douglas, Ma, Protter \cite{DouglasMaProtter:1996:NMF}, in which the PDE is treated by a finite difference method. Later in 2008, Ma, Shen and Zhao proposed a new approach based on the Hermite-spectral Method to treat the PDE (see \cite{MaShenZhao:2008:ONA}), which is then proved to be much more better than the previous one. In this paper, we present a new approach, still based on the second trend, to the coupled FDSDEs problem, by combining the classical Four Step Scheme with Domain Decomposition Methods or Schwarz Methods, with Waveform Relaxation. The idea is to impose Parallel Computing on solving SDEs numerically. We reconstruct the Four Step Scheme with some new conditions and then associate it with Schwarz Waveform Relaxation Methods to parallelize the system of quasilinear parabolic equations $(\ref{1e2})$: System $(\ref{1e2})$ is divided into $I$ subproblems, and each problem is solved seperatedly. The scheme is then proved to be well-posed and stable. Up to what we know, this is the first attempt trying to apply Domain Decomposition Algorithms to stochastic differential equations.
\\\h  In the pioneer work \cite{Lions:1987:OSA}, \cite{Lions:1989:OSA}, \cite{Lions:1990:OSA}, P. L. Lions laid the foundations of the modern theory of Schwarz Algorithms. With the development of parallel computers, the interest in Schwarz Methods have grown rapidly, as these methods lead to inherently parallel algorithms. However, the problem of convergence of Schwarz Methods still remains an open problem up to now. In his pioneer work \cite{Lions:1987:OSA}, \cite{Lions:1989:OSA}, \cite{Lions:1990:OSA}, P. L. Lions has proved that the classical Schwarz Method for Linear Laplace Equation is in fact equivalent to a sequence of projections in a Hilbert space. Moreover, he also observed that the Schwarz Sequences of linear elliptic equations is related to Minimum Methods over product spaces. This observation was used later by L. Badea in \cite{Badea:1991:OSA} to prove the convergence of the classical Schwarz Method for a class of linear elliptic equations. 
\\ Later, in \cite{GanderStuart:1998:STC} and \cite{Giladi:1997:STD}, M. Gander-A. Stuart and E. Giladi-H. B. Keller applied Schwarz Methods to the $1$-dimensional linear advection-diffusion equation. Refering to the paper \cite{Burrage:1996:PPW}, they call Schwarz Methods applied to parabolic equations by Schwarz Waveform Relaxation Algorithms. The techniques of proving the convergence used in these papers were Laplace and Fourier Transforms and some explicit calculations. An extension to the nonlinear reaction-diffusion equation in $1$-dimension was considered in \cite{Gander:1999:WRA}. With the hypothese $f'(c)\leq C$ in \cite{Gander:1999:WRA}, proofs of linear convergence on unbounded time domains, and superlinear convergence on finite time intervals were then given in case of $n$ subdomains, based on some explicit computations on the linearized equations. Another extension to monotone nonlinear PDEs in higher dimension was considered by Lui in \cite{Lui:2001:OSM}, \cite{Lui:2002:OLM}, \cite{Lui:2001:OMS}. The main idea of the papers is based on the well-known Sub-Super Solutions Method in the theory of partial differential equations and the initial guesses are usually sub or super solutions of the equations. Recently, an extension to Systems of Semilinear Reaction-Diffusion Equations was investigated in \cite{Descombes:2010:SWR}. This is the first paper trying to apply Schwarz Methods to a system of PDEs in $1$-dimension and the proof of convergence is based strongly on the technique introduced in \cite{Gander:1999:WRA}. 
\\ In order to solve FBSDEs by Schwarz Methods, we encounter the system of quasilinear parabolic equations $(\ref{1e2})$ in $n$-dimension. We then introduce a new technique, which allows us to study the convergence of Schwarz Algorithms for systems of nonlinear equations in $n$-dimension. 
\section{Forward-backward stochastic differential equations} 
The structure of this section is as follows: In Section 2.1, we will give the definition of forward-backward stochastic differential equations, then state some results on the existence and uniqueness of the equations; these results will be proved in Section 2.2.
\subsection{Existence and uniqueness results}
\h Let $\{W_t:t\geq 0\}$ be a d-dimensional Brownian motion defined on the probability space $(\Omega,\mathfrak{F},P)$. We define by $\{\mathfrak{F}_t\}$ the $\sigma$-field generated by $W$. We suppose that $\{\mathfrak{F}_t\}$ contains all the null sets of $\mathfrak{F}$ and consider the following forward-backward SDEs
\begin{equation}
\label{2e1}
\left \{ \begin{array}{ll}X_t=x+\int_0^tb(s,X_s,Y_s)ds+\int_0^t\sigma(s,X_s,Y_s)dW_s,\vspace{.1in}\\ 
Y_t=g(X_T)+\int_t^T\Hat{b}(s,X_s,Y_s)ds+\int_t^T\Hat{\sigma}(s,X_s,Y_s,Z_s)dW_s,\end{array}\right. 
\end{equation}
where $t$ belongs to $[0,T]$; the processes $X$, $Y$, $Z$ take values in $\mathbb{R}^n$, $\mathbb{R}^m$, $\mathbb{R}^{m\times d}$, respectively and $b$, $\Hat b$, $\sigma$, $\Hat{\sigma}$, $g$ take values in $\mathbb{R}^n$ ,$\mathbb{R}^m$, $\mathbb{R}^{n\times d}$, $\mathbb{R}^{m\times d}$ and $\mathbb{R}^m$, respectively. 
\\ Since we are only looking for {\it ordinary adapted solutions} of the FBSDEs $(\ref{2e1})$ (i.e. solutions which are $\{\mathfrak{F}_t\}$-adapted and square-integrable, and satisfy $(\ref{2e1})$ $P$-almost surely), we can write $(\ref{2e1})$ in the following form
\begin{equation}
\label{2e2}
\left \{ \begin{array}{ll}dX_t=b(t,X_t,Y_t)dt+\sigma(t,X_t,Y_t)dW_t,\vspace{.1in}\\ 
dY_t=-\Hat{b}(t,X_t,Y_t)dt-\Hat{\sigma}(t,X_t,Y_t,Z_t)dW_t,\vspace{.1in}\\ 
X_0=x,Y_T=g(X_T).\end{array}\right. 
\end{equation}
Now we state the conditions that we impose on $(\ref{2e1})$ and $(\ref{2e2})$:
\\ (A1) The functions $b$, $\Hat b$, $\sigma$, $\Hat\sigma$, $g$ are $C^1$-functions with bounded partial derivaties; and $g$ is bounded in $C^{2+\delta}(\mathbb{R}^m)$ for some $\delta$ in $(0,1)$. 
\\ (A2) The matrix $\sigma$ satisfies $$|\sigma(t,x,y)|\leq C,$$ and $$\sigma(t,x,y)\sigma^T(t,x,y)\geq \nu(|y|)I, \forall (t,x,y)\in[0,T]\times\mathbb{R}^n\times\mathbb{R}^m,$$ where $\nu$ is a positive continuous function, and $C$ is a positive constant. 
\\ (A3) We impose the following assumptions on $\Hat\sigma$:
\\ There exists a positive continuous function $\kappa$ such that
\begin{equation}
\label{2e3}
\mathop{\sup}\{~|z|~{\Large:}~~\Hat{\sigma}(t,x,y,z)=0\}\leq\kappa(|y|),\forall (t,x,y)\in [0,T]\times\mathbb{R}^n\times\mathbb{R}^m.
\end{equation}
\\ For fixed $(t,x,y)$, the function $z\mapsto \Hat\sigma(t,x,y,z) $ is bijective, $\Hat\sigma^{-1}(t,x,y)(\zeta)$ is contiuous with respect to $t,x,y$ and $\zeta$; and there exists a continuous function $\lambda$ from $\mathbb{R}$ to $\mathbb{R}_+$, such that 
\begin{equation}
\label{2e4}
|\Hat\sigma^{-1}(t,x,y)(\zeta)-\Hat\sigma^{-1}(t,x,y)(\zeta')|\leq \lambda(|y|)|\zeta-\zeta'|^{\alpha},\forall\zeta,\zeta'\in\mathbb{R}^{m\times n},
\end{equation}
where $\alpha$ is a constant lying in $[1,2)$.
\\ (A4) There exists a positive function $\eta$ and a positive constant $C$ such that for all $(t,x,y)$ in $[0,T]\times \mathbb{R}^n\times\mathbb{R}^m$, $$|b(t,x,y)|\leq \eta(|y|)$$ and $$\Hat{b}(t,x,0)\leq C.$$ 
\\ (A5) We suppose also that for all  $k$ $\in$ $\{1,\dots,m\}$ and for all $(t,x,y_1,\dots,$ $y_{k-1},y_{k+1},$ $\dots,y_{m})$ in $[0,T]\times\mathbb{R}^n\times\mathbb{R}^{m-1}$: 
$$\hat{b}^k(t,x,y_1,\dots,y_{k-1},0,y_{k+1},\dots,y_{m})=0,$$ and $\hat{b}^k(t,x,$ $y_1,\dots,$ $y_{k-1},$ $y_k,y_{k+1},$ $\dots,y_{m})$ is decreasing in $y_k$.
\\\h Assuming that $Y_t$ takes the form $\theta(t,X_t)$, $P$-almost surely, for all $t$ in $[0,T]$, by the It\^o's formula, we can transform the backward SDE in $(\ref{2e2})$ into the following system of PDEs
\begin{equation}
\label{2e7}
\left \{ \begin{array}{ll}\frac{\partial \theta}{\partial t}+\sum_{i,j=1}^na_{i,j}\frac{\partial^2 \theta}{\partial x_i\partial x_j}+<{\nabla\theta},b(t,x,\theta)>+\hat{b}(t,x,\theta)=0,\mbox{ in } (0,T)\times\mathbb{R}^n,\vspace{.1in}\\ 
\theta(T,x)=g(x),\mbox{ on } \mathbb{R}^n,\end{array}\right. 
\end{equation}
where we define $\sigma^T(t,x,\theta)$ to be the transposed matrix of $\sigma(t,x,\theta)$ and $ (a_{i,j})$ to be $\frac{1}{2}\sigma(t,x,\theta)\sigma^T(t,x,\theta)$.
The result on the existence and uniqueness of solutions of $(\ref{2e2})$ then follows
\begin{theorem}
\label{2t3}
(Existence and Uniqueness Theorem) Suppose that Conditions $(A1)$- $(A4)$ above are satified, then Equation $(\ref{2e2})$ admits a unique solution $(X,Y,Z)$ defined as follows: 
\\\h The process $X$ is the solution of the following forward SDE
\begin{eqnarray}
\label{2e9}
X_t=x+\int_{0}^tb(s,X_s,\theta(s,X_s))ds+\int_0^t\sigma(s,X_s,\theta(s,X_s))ds,
\end{eqnarray}
where $\theta$ is the unique solution of $(\ref{2e7})$. 
\\\h The processes $Y_t$, $Z_t$ are then $\theta(t,X_t)$ and $z(t,X_t,\theta(t,X_t),\nabla\theta(t,X_t))$, where $z$ is a smooth mapping from $[0,T]\times\mathbb{R}^n\times\mathbb{R}^m\times\mathbb{R}^{m\times n}$ to $\mathbb{R}^{m\times d}$ satisfying
 \begin{equation}
\label{2e6}
\xi\sigma(t,x,y)+\Hat{\sigma}(t,x,y,z(t,x,y,\xi))=0, \forall (t,x,y,\xi)\in[0,T]\times \mathbb{R}^n\times \mathbb{R}^m\times \mathbb{R}^{m\times n}.
\end{equation}
\end{theorem}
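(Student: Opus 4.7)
The plan is to follow the four-step scheme of Ma-Protter-Yong, but adapted to the relaxed hypotheses imposed here (in particular the $y$-dependent ellipticity constant in (A2) and the Hölder-in-$z$ invertibility in (A3)). The proof has four layers: (i) produce a classical solution $\theta \in C^{1,2}([0,T]\times\mathbb{R}^n;\mathbb{R}^m)$ of the decoupled quasilinear parabolic system $(\ref{2e7})$; (ii) use (A3) to define the auxiliary function $z$ via the algebraic equation $(\ref{2e6})$; (iii) with $\theta$ and $z$ in hand, solve the forward SDE $(\ref{2e9})$ by standard strong-existence results; (iv) verify via It\^o's formula that the resulting triple satisfies $(\ref{2e2})$, and close the argument with a representation-based uniqueness step.

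Step (i) is the main obstacle and the only place where real work is needed. System $(\ref{2e7})$ is quasilinear and only conditionally uniformly parabolic, since $\sigma\sigma^T \geq \nu(|y|)I$ gives an ellipticity constant that degenerates as $|\theta|$ grows. One must therefore first derive an $L^\infty$ a priori bound on $\theta$ independent of the ellipticity constant. This is precisely where (A5) enters: the componentwise sign condition $\hat{b}^k(t,x,y_1,\dots,0,\dots,y_m)=0$ together with monotonicity in $y_k$ yields a maximum-principle-type estimate bounding $\|\theta\|_\infty$ in terms of $\|g\|_\infty$, which then upgrades the local lower bound $\nu(|\theta|)$ into a genuine uniform ellipticity constant on the relevant range. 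With that in place, (A1) and (A4) provide the Lipschitz and growth hypotheses needed to invoke Ladyzhenskaya-Solonnikov-Ural'ceva type existence theorems for quasilinear parabolic systems in Hölder classes, giving a unique $\theta$ with bounded first and second spatial derivatives; the regularity $g\in C^{2+\delta}$ from (A1) matches the required compatibility on the terminal data.

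Step (ii) is immediate: (A3) yields a unique $z(t,x,y,\xi) = \hat{\sigma}^{-1}(t,x,y)\bigl(-\xi\sigma(t,x,y)\bigr)$, continuous in $(t,x,y)$ and Hölder of exponent $\alpha$ in $\xi$. Step (iii) reduces to the classical theorem for SDEs with Lipschitz coefficients, since $b(t,x,\theta(t,x))$ and $\sigma(t,x,\theta(t,x))$ inherit bounded Lipschitz constants from the boundedness of $\nabla\theta$ and the smoothness in (A1). For step (iv), applying It\^o's formula to $Y_t := \theta(t,X_t)$ converts the spatial derivatives via the PDE $(\ref{2e7})$ into exactly $-\hat{b}(t,X_t,Y_t)\,dt$, while the martingale term becomes $\nabla\theta(t,X_t)\sigma(t,X_t,Y_t)\,dW_t$; by the very definition $(\ref{2e6})$ of $z$, this martingale term equals $-\hat{\sigma}(t,X_t,Y_t,Z_t)\,dW_t$, and the terminal condition $\theta(T,\cdot)=g$ closes the backward equation.

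Finally, uniqueness is proved by a standard verification argument: for any ordinary adapted solution $(X',Y',Z')$, applying It\^o's formula to $\theta(t,X'_t)$ and subtracting from the backward equation yields a BSDE satisfied by $Y'_t - \theta(t,X'_t)$ with zero driver and zero terminal value, forcing $Y'_t = \theta(t,X'_t)$ almost surely; then $X'$ solves the closed equation $(\ref{2e9})$ whose strong solution is unique, and (A3) then forces $Z'_t$ to coincide with the stated expression. The entire technical risk is therefore concentrated in the a priori $L^\infty$ bound of step (i) and the citation of the correct quasilinear parabolic existence result; everything else is bookkeeping around the four-step template.
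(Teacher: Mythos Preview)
Your outline follows the four-step template correctly in steps (ii)--(iv), but there are two genuine problems.

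First, in step (i) you invoke (A5) to get the $L^\infty$ a priori bound on $\theta$, writing that ``this is precisely where (A5) enters.'' But Theorem~\ref{2t3} assumes only (A1)--(A4); (A5) is reserved in the paper for the domain-decomposition well-posedness (Theorem~\ref{2t4}) and is not available here. The paper obtains the a priori bound differently: from (A1) (bounded $\partial_y\hat b$) and (A4) ($|\hat b(t,x,0)|\le C$) one gets the linear growth $|\hat b(t,x,y)|\le C_1(1+|y|)$ and hence $|\langle \hat b(t,x,y),y\rangle|\le C_2(1+|y|^2)$, which are exactly the structural hypotheses (3.5)--(3.6) needed to invoke the Ladyzhenskaya--Solonnikov--Ural'ceva estimate (Lemma~\ref{3l1}). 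No componentwise sign condition is used.

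Second, and more seriously, your uniqueness argument is too quick. You claim that $Y'_t-\theta(t,X'_t)$ satisfies a BSDE ``with zero driver.'' It does not: the dynamics of $X'$ involve $b(t,X'_t,Y'_t)$ and $\sigma(t,X'_t,Y'_t)$, while the PDE~$(\ref{2e7})$ is written with $b(t,x,\theta)$ and $a_{ij}(t,x,\theta)$, so applying It\^o to $\theta(t,X'_t)$ and subtracting leaves residual drift and diffusion terms proportional to $b(\cdot,Y')-b(\cdot,\theta(\cdot,X'))$, $a_{ij}(\cdot,Y')-a_{ij}(\cdot,\theta(\cdot,X'))$, and $\hat\sigma(\cdot,Y',Z')-\hat\sigma(\cdot,\theta,\cdot)$. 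The paper handles this by applying It\^o to $|Y'_t-\theta(t,X'_t)|^2$, bounding the cross terms via the Lipschitz conditions in (A1), and then confronting the real difficulty: because (A3) only gives $|Z'-Z^*|\le M_3|\hat\sigma(\cdot,Z^*)-\hat\sigma(\cdot,Z')|^\alpha$ with $\alpha\in[1,2)$, the resulting integral inequality for $G(t)=E|Y'_t-\theta(t,X'_t)|^2$ is nonlinear, of the form $G(t)\le M_7\int_t^T\bigl[G(s)+G(s)^{1/(2-\alpha)}\bigr]ds$. The paper closes this with a two-stage Gronwall argument (first bound $G$ via a comparison ODE, then normalise so that $G^{1/(2-\alpha)}\le G$ and apply the classical Gronwall). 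Your proposal gives no indication of how to absorb the $Z$-terms when $\alpha>1$, and ``zero driver, zero terminal value'' does not apply.
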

\begin{remark} Assumptions $(A1)$ - $(A4)$ is similar to Assumptions  $(A1)$ - $(A4)$ in \cite{MaProtterYong:1994:SFB}. However, in $(\ref{2e4})$, $\alpha$ can vary in $[1,2)$ while in the condition $(2.12)$ of $(A3)$ \cite{MaProtterYong:1994:SFB}, $\alpha=1$. $(A5)$ will be used later in the proof of convergence for the Parallel Four Steps Domain Decomposition Scheme.
\end{remark}
\subsection{Proof of existence and uniqueness results}
\h This subsection is devoted to the proof of the existence and uniqueness results stated in Section $\ref{2t3}$. We first consider the algebraic equation $(\ref{2e6})$. The following result shows that $(\ref{2e6})$ has a solution.
\begin{proposition}
\label{2t1}
Under Assumption $(A3)$, Equation $(\ref{2e6})$ has a unique continuous solution $z$, that satisfies the estimate
$$|z(t,x,y,\xi)|\leq \lambda(|y|)|\xi||\sigma(t,x,y)|^{\alpha}+\kappa(|y|), ~~\forall (t,x,y,\xi)\in[0,T]\times \mathbb{R}^n\times\mathbb{R}^m\times \mathbb{R}^{m\times n}.$$
\end{proposition}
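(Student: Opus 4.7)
The plan is to solve (\ref{2e6}) explicitly. Since (A3) guarantees that $z \mapsto \Hat\sigma(t,x,y,z)$ is bijective with continuous inverse, the equation $\Hat\sigma(t,x,y,z) = -\xi\sigma(t,x,y)$ has exactly one solution, namely
\[
z(t,x,y,\xi) := \Hat\sigma^{-1}(t,x,y)\bigl(-\xi\sigma(t,x,y)\bigr).
\]
Existence and uniqueness of $z$ are therefore immediate from the bijectivity clause of (A3). Continuity of $z$ on $[0,T]\times\mathbb{R}^n\times\mathbb{R}^m\times\mathbb{R}^{m\times n}$ then follows by composing the joint continuity of $\Hat\sigma^{-1}$ in $(t,x,y,\zeta)$, which is part of (A3), with the continuity of $(t,x,y,\xi) \mapsto -\xi\sigma(t,x,y)$, which is ensured by the $C^1$ regularity of $\sigma$ in (A1).

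For the pointwise bound, I would introduce the auxiliary base point
\[
z_0(t,x,y) := \Hat\sigma^{-1}(t,x,y)(0),
\]
i.e.\ the unique root in $z$ of $\Hat\sigma(t,x,y,z) = 0$. By the first clause of (A3), that is by (\ref{2e3}), this base point satisfies $|z_0(t,x,y)| \leq \kappa(|y|)$. Applying the H\"older-type estimate (\ref{2e4}) with $\zeta = -\xi\sigma(t,x,y)$ and $\zeta' = 0$ yields
\[
\bigl|z(t,x,y,\xi) - z_0(t,x,y)\bigr| \leq \lambda(|y|)\,\bigl|\xi\sigma(t,x,y)\bigr|^{\alpha},
\]
and the triangle inequality together with the submultiplicativity of the matrix norm $|\xi\sigma(t,x,y)| \leq |\xi|\,|\sigma(t,x,y)|$ then produce the stated bound.

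Overall, this proposition is essentially a direct transcription of the content of (A3) into the form needed downstream for the four step scheme; the only conceptual step is recognizing the correct auxiliary base point $z_0$ so that the H\"older estimate (\ref{2e4}) can be leveraged against the a priori bound on roots of $\Hat\sigma$. I do not anticipate any serious obstacle.
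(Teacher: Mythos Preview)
Your proposal is correct and follows essentially the same argument as the paper: define $z$ via the inverse $\Hat\sigma^{-1}$, deduce continuity from the continuity of $\Hat\sigma^{-1}$ and $\sigma$, and obtain the bound by the triangle inequality splitting $z$ against the base point $\Hat\sigma^{-1}(t,x,y)(0)$ and applying (\ref{2e3})--(\ref{2e4}). The only cosmetic difference is that you keep the minus sign in $-\xi\sigma$ (which is indeed what (\ref{2e6}) dictates) whereas the paper silently drops it; this is irrelevant once absolute values are taken.
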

\begin{proof}
Since $z(t,x,y,\xi)$ is equal to $\Hat\sigma^{-1}(t,x,y)(\xi\sigma(t,x,y))$ and $\Hat\sigma^{-1}$, $\sigma$ are continuous, then $z(t,x,y,\xi)$ is continuous. Finally, Conditions $(\ref{2e3})$ and $(\ref{2e4})$ give
\begin{eqnarray*}
|z(t,x,y,\xi)|&\leq&|\Hat\sigma^{-1}(t,x,y)(\xi\sigma(t,x,y))-\Hat\sigma^{-1}(t,x,y)(0)|+|\Hat\sigma^{-1}(t,x,y)(0)|\\\nonumber
&\leq&\lambda(|y|)|\xi\sigma(t,x,y)|^{\alpha}+|\kappa(|y|)|.
\end{eqnarray*}
\end{proof}
\h Now, the following Proposition states a result on the existence and uniqueness of solutions to $(\ref{2e7})$:
\begin{proposition}
\label{2t2}
Suppose that $(A1)-(A4)$ hold. Then  the system $(\ref{2e7})$ admits a unique classical solution $\theta(t,x)$, such that $\theta(t,x)$, $\frac{\partial}{\partial t}\theta(t,x)$, $\nabla\theta(t,x)$, $\Delta\theta(t,x)$ are bounded in $C((0,T)\times\mathbb{R}^n)$.
\end{proposition}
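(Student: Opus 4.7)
The plan is to reverse time, setting $\tilde\theta(s,x)=\theta(T-s,x)$, so that (\ref{2e7}) becomes a forward Cauchy problem for a quasilinear parabolic system with initial data $g\in C^{2+\delta}$. Note that $z$ from Proposition \ref{2t1} does not appear in the PDE for $\theta$ itself, so $(\ref{2e7})$ is of the form
\begin{equation*}
\partial_s\tilde\theta = \sum_{i,j} a_{ij}(s,x,\tilde\theta)\,\partial_i\partial_j\tilde\theta + \langle\nabla\tilde\theta,b(s,x,\tilde\theta)\rangle + \hat b(s,x,\tilde\theta),
\end{equation*}
where the principal coefficients depend on $\tilde\theta$ but not on $\nabla\tilde\theta$. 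The overall strategy is the classical one: establish \emph{a priori} $C^{2+\delta,1+\delta/2}$ estimates on any classical solution, then obtain existence by a Leray--Schauder fixed-point argument, and uniqueness by a standard energy/Gronwall comparison using the Lipschitz continuity of the coefficients from (A1).

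The first \emph{a priori} estimate is an $L^\infty$ bound on $\theta$. For each component $k$ I would apply the scalar maximum principle to $\theta^k$, using (A1) (which gives Lipschitz dependence of $\hat b^k$ on its $y$-arguments) and (A4) (which gives $\hat b^k(t,x,0)$ bounded). A standard Gronwall-type argument on $\max_x|\theta^k|$ yields $\|\theta\|_\infty\le M_0$ depending only on $T$, $\|g\|_\infty$ and the constants in (A1), (A4). Once this is in hand, $b(t,x,\theta)$ is uniformly bounded through (A4) with the bound $\eta(M_0)$, and the principal coefficient $a_{ij}(t,x,\theta)$ satisfies the uniform ellipticity $a_{ij}\xi_i\xi_j \ge \tfrac12\nu(M_0)|\xi|^2$ by (A2). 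Hence the system becomes uniformly parabolic with bounded $C^1$-coefficients.

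At this stage I would invoke the interior and boundary Schauder/DeGiorgi--Nash--Moser theory for linear uniformly parabolic systems (as exposed in Ladyzhenskaya--Solonnikov--Uraltseva and applied in the four-step scheme of \cite{MaProtterYong:1994:SFB}): first, DeGiorgi--Nash--Moser type estimates give a uniform $C^{\delta',\delta'/2}$ bound for $\theta$; next, differentiating the equation in $x$ and viewing $\nabla\theta$ as a solution of a linear system with $C^{\delta'}$ coefficients, linear Schauder estimates give a uniform bound on $\nabla\theta$ in $C^{1+\delta,(1+\delta)/2}$; finally, $\partial_t\theta$ and $\Delta\theta$ are controlled directly from the PDE. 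Combined with the continuity of $g\in C^{2+\delta}$, this closes the a priori estimate and, via Leray--Schauder applied to the map $\theta\mapsto\bar\theta$ where $\bar\theta$ solves the equation with $a_{ij}$, $b$, $\hat b$ frozen at $\theta$, delivers a classical solution with the regularity claimed. The main obstacle I anticipate is the gradient estimate: because (A2) only guarantees \emph{local} ellipticity $\nu(|y|)$ (not uniform in $y$), everything hinges on first pinning down $\|\theta\|_\infty$ cleanly; once that step is executed, the rest is a routine appeal to the linear parabolic Schauder machinery, and the extra flexibility $\alpha\in[1,2)$ allowed by (A3) plays no role here since $z$ is not present in the equation for $\theta$.
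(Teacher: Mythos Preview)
Your proposal follows essentially the same route as the paper --- both rest on the Ladyzhenskaya--Solonnikov--Ural'tseva quasilinear parabolic theory invoked in \cite{MaProtterYong:1994:SFB} --- but the paper packages the argument differently and handles two points that you leave implicit.

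First, the paper does not work directly on $(0,T)\times\mathbb{R}^n$. Instead it states a black-box lemma (Theorem~7.1, Chapter~VII of \cite{LadyzenskajaSolonnikovUralceva:1967:LQE} combined with Lemma~3.2 of \cite{MaProtterYong:1994:SFB}) giving existence, uniqueness and uniform $C^{1+\delta',2+\delta'}$ bounds on an arbitrary bounded smooth domain $\omega$, with constants \emph{independent of} $\omega$. It then verifies that (A1)--(A4) imply the structural hypotheses of that lemma --- notably the linear-growth conditions $|\hat b(t,x,y)|\le C_1(1+|y|)$ and $|\langle\hat b(t,x,y),y\rangle|\le C_2(1+|y|^2)$, both obtained from (A1) and (A4) via $|\hat b(t,x,y)|\le|\hat b(t,x,y)-\hat b(t,x,0)|+|\hat b(t,x,0)|$ --- and finally passes to $\mathbb{R}^n$ by a convergence argument as $\omega\uparrow\mathbb{R}^n$. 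Your sketch omits this exhaustion step; Schauder estimates on the whole space require some care, and the domain approximation is the standard way to supply it.

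Second, your $L^\infty$ bound via a \emph{component-wise} scalar maximum principle is not quite adequate for a coupled system: $\hat b^k(t,x,y)$ depends on the full vector $y$, so at a spatial maximum of $\theta^k$ the right-hand side still sees the other components. The clean version --- which is exactly what the LSU lemma asks for --- is the vectorial structural condition $|\langle\hat b(t,x,y),y\rangle|\le C(1+|y|^2)$, from which an $L^\infty$ bound on $|\theta|$ (not each $\theta^k$ separately) follows. This is what the paper checks. Once that is in place, your outline of gradient and higher-order estimates is correct, and you are right that (A3) and the map $z$ play no role in this proposition.
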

\begin{proof} We first recall a useful result. Let $\omega$ be a bounded and smooth enough domain of $\mathbb{R}^n$, we consider the system
\begin{equation}
\label{3e1}
\left \{ \begin{array}{ll}\frac{\partial \phi}{\partial t}+\sum_{i,j=1}^na_{i,j}\frac{\partial^2  \phi}{\partial x_i\partial x_j}+<{\nabla \phi},b(t,x, \phi)>+\hat{b}(t,x, \phi)=0,\mbox{ in } (0,T)\times\omega,\vspace{.1in}\\ 
 \phi(t,x)=g(t,x),\mbox{ on } [0,T]\times\partial\omega, \vspace{.1in}\\ 
 \phi(T,x)=g(T,x),\mbox{ on } \omega.\end{array}\right. 
\end{equation}
Similarly as in Theorem $7.1$, Chapter VII of \cite{LadyzenskajaSolonnikovUralceva:1967:LQE} and Lemma 3.2 of \cite{MaProtterYong:1994:SFB}, we have the following Lemma
\begin{lemma}
\label{3l1}
Suppose that all the functions $a_{ij}$, $b_i$, $\Hat{b}$ are smooth, $g$ is bounded in $C^{1+\delta,2+\delta}([0,T]\times\omega)$ with $\delta$ belongs to $(0,1)$; and for all $(t,x,y)\in [0,t]\times\mathbb{R}^n\times\mathbb{R}^m$, we have
\begin{equation}
\label{3e2}
\nu_1(|y|)I\leq (a_{ij}(t,x,y))\leq \nu_2(|y|)I,
\end{equation}
\begin{equation}
\label{3e3}
|b(t,x,y)|\leq\mu(|y|),
\end{equation}
\begin{equation}
\label{3e4}
\left|\frac{\partial}{\partial x_l}a_{ij}(t,x,y)\right|+\left|\frac{\partial}{\partial y_k}a_{ij}(t,x,y)\right|\leq\mu(|y|),
\end{equation}
for some continuous positive functions $\nu_1(.)$, $\nu_2(.)$, $\mu(.)$; and
\begin{equation}
\label{3e5}
|\Hat{b}(t,x,y)|\leq C_1(1+|y|),
\end{equation}
\begin{equation}
\label{3e6}
|<\Hat{b}(t,x,y),y>|\leq C_2(1+|y|^2),
\end{equation}
for some positive constants $C_1$, $C_2$. Then $(\ref{3e1})$ admits a unique classical solution $\phi(t,x)$. Moreover, $\phi(t,x)$, $\frac{\partial}{\partial t}\phi(t,x)$, $\nabla\phi(t,x)$, $\Delta\phi(t,x)$ are bounded in $C((0,T)\times\mathbb{R}^n)$ by a constant which does not depend on $\omega$, and there exists a positive number $\delta'$ in $(0,1)$ such that $\phi$ belongs to $C^{1+\delta',2+\delta'}((0,T)\times\omega)$.
\end{lemma}
\h Now, we will apply this Lemma to our case. First of all, we verify that the conditions of Lemma $\ref{3l1}$ hold. We can see that $(\ref{3e2})$ is a consequence of $(A2)$, $(\ref{3e3})$ is a consequence of $(A4)$ and $(\ref{3e4})$ is a consequence of $(A1)$. We only need to prove $(\ref{3e5})$ and $(\ref{3e6})$. 
\\ Conditions $(A1)$ and $(A4)$ infer that
$$|\Hat{b}(t,x,y)|\leq|\Hat{b}(t,x,y)-\Hat{b}(t,x,0)|+|\Hat{b}(t,x,0)|\leq C_1(1+|y|),$$
and this implies
$$|<\Hat{b}(t,x,y),y>|\leq C_1'(1+|y|)|y|\leq C_2(1+|y|^2),$$
where $C_1'$ is a positive constant.
Lemma $\ref{3l1}$ then implies that there exists a solution $\phi(t,x)$ for all $\omega$ in $\mathbb{R}^n$ bounded and smooth enough. 
\\\h By a convergence argument similar as in \cite{MaProtterYong:1994:SFB}, we deduce that $(\ref{2e7})$ admits a unique classical solution $\theta(t,x)$, such that $\theta(t,x)$, $\frac{\partial}{\partial t}\theta(t,x)$, $\nabla\theta(t,x)$, $\Delta\theta(t,x)$ are bounded in $C((0,T)\times\mathbb{R}^n)$ .
\end{proof}
\h We consider the forward SDE on $X$ from $(\ref{2e2})$, with the assumption that $Y_t$ can be written under the form $\theta(t,X_t)$
\begin{equation}
\label{2e5}
\left \{ \begin{array}{ll}dX_t=b(t,X_t,\theta(t,X_t))dt+\sigma(t,X_t,\theta(t,X_t))dW_t,\vspace{.1in}\\ 
X_0=x.\end{array}\right. 
\end{equation}
From the Lipschitz condition $(A1)$, we can conclude that $(\ref{2e5})$ has a unique solution $X$, which belongs to $\mathbb{L}^2(0,T)$. We then have the following proof of Theorem ${\ref{2t3}}$, based on the previous two Propositions.
\begin{proof}
The proof is divided into two steps
{\\\it Step 1: $(X_t,Y_t,Z_t)$ is a solution of $(\ref{2e2})$.}
\\\h We note that the existence and uniqueness of $X_t$ has already been discussed in the previous paragraph, we only need to prove that $(Y_t,Z_t)$ is a solution of the backward SDE in the system $(\ref{2e2})$. It follows from It\^o's formula that 
\begin{eqnarray}
\label{3e7}
dY_t& = & \left[\frac{\partial\theta}{\partial t}+\sum_{i=1}^n\frac{\partial\theta}{\partial x_i}b^i(x,X_t,Y_t)+\sum_{i,j=1}^n\frac{\partial^2\theta}{\partial x_i\partial x_j}a_{ij}\right]dt\\\nonumber
& & +\sum_{i=1}^n\frac{\partial\theta}{\partial x_i}\sigma^i(t,X_t,Y_t)dW_t.
\end{eqnarray} 
Equations $(\ref{3e7})$, Proposition $\ref{2t1}$ and the fact that $\theta$ is a solution of $(\ref{2e7})$ lead to 
\begin{eqnarray}
\label{3e9}
dY_t &=& -\Hat{b}(t,X_t,Y_t)dt-\Hat\sigma(t,X_t,Y_t,Z_t)dW_t.
\end{eqnarray}
 Hence, $(X_t,Y_t,Z_t)$ is a solution of $(\ref{2e2})$.
{\\\it Step 2: $(X_t,Y_t,Z_t)$ is unique.}
\\\h Let $(X_t^*,Y^*_t,Z^*_t)$ a solution of $(\ref{2e2})$, and set $Y'_t$ to be $\theta(t,X_t^*)$, $Z'_t$ to be $z(t,X_t^*,$ $\theta(t,X_t^*),\nabla\theta(t,X_t^*))$. It suffices to show that $Y'_t=Y^*_t$ and $Z'_t=Z^*_t$.
\\\h We first try to get some estimate for the quantity $E(|Y'_t-Y^*_t|^2)$. It\^o's formula gives
\begin{eqnarray*}
d(Y'_t-Y^*_t)&=&\left[\frac{\partial\theta}{\partial t}+\sum_{i=1}^n\frac{\partial \theta}{\partial x_i}b^i(t,X_t^*,Y_t^*)+\sum_{i,j=1}^na_{ij}\frac{\partial^2\theta}{\partial x_i\partial x_j}+\Hat{b}(t,X_t^*,Y_t^*)\right]dt\\
&+ &\left[\sum_{i=1}^n\frac{\partial\theta}{\partial x_i}\sigma^t(t,X_t^*,Y_t^*)+\Hat\sigma(t,X_t^*,Y_t^*,Z_t^*)\right]dW_t.
\end{eqnarray*}
 By using It\^o's formula for the $k$-th component of $Y'_t-Y^*_t$, $k\in\{1,\dots,m\}$, we obtain
\begin{eqnarray}
\label{3e10}
& &d(Y'^k_t-{Y^*_t}^k)^2\\\nonumber
&=&\left[\frac{\partial\theta^k}{\partial t}+\sum_{i=1}^n\frac{\partial \theta^k}{\partial x_i}b^i(t,X_t^*,Y_t^*)+\sum_{i,j=1}^na_{ij}\frac{\partial^2\theta^k}{\partial x_i\partial x_j}+\Hat{b}^k(t,X_t^*,Y_t^*)\right]\times\\\nonumber
& &\times2(Y'^k_t-{Y^*_t}^k)dt+\\\nonumber
& &+2(Y'^k_t-{Y^*_t}^k)\left[\sum_{i=1}^n\frac{\partial\theta^k}{\partial x_i}\sigma^i(t,X_t^*,Y_t^*)+\Hat\sigma^k(t,X_t^*,Y_t^*,Z_t^*)\right]dW_t\\\nonumber
& &+\left[\sum_{i=1}^n\frac{\partial\theta^k}{\partial x_i}\sigma^i(t,X_t^*,Y_t^*)+\Hat\sigma^k(t,X_t^*,Y_t^*,Z_t^*)\right]^2dt.
\end{eqnarray}
From the previous equation, the fact that $\theta$ is a solution of $(\ref{2e7})$ implies
\begin{eqnarray}
\label{3e12}
& &d(Y'^k_t-{Y^*_t}^k)^2\\\nonumber
&=&2(Y'^k_t-{Y^*_t}^k)\{\sum_{i=1}^n\frac{\partial \theta^k}{\partial x_i}[b^i(t,X_t^*,Y_t^*)-b^i(t,X_t^*,Y'_t)]+\\\nonumber
& &+\sum_{i,j=1}^n\frac{\partial^2\theta^k}{\partial x_i\partial x_j}[a_{ij}(x,X_t^*,Y_t^*)-a_{ij}(x,X_t^*,Y'_t)]+\\\nonumber
& &+[\Hat{b}^k(t,X_t^*,Y_t^*)-\Hat{b}^k(t,X^*_t,Y_t')]\}dt+\\\nonumber
& &+2(Y'^k_t-{Y^*}^k_t)\left[\sum_{i=1}^n\frac{\partial\theta^k}{\partial x_i}\sigma^i(t,X_t^*,Y_t^*)+\Hat\sigma^k(t,X_t^*,Y_t^*,Z_t^*)\right]dW_t\\\nonumber
& &+\left[\sum_{i=1}^n\frac{\partial\theta^k}{\partial x_i}\sigma^i(t,X_t^*,Y_t^*)+\Hat\sigma^k(t,X_t^*,Y_t^*,Z_t^*)\right]^2dt.
\end{eqnarray}
Since $z$ is a solution of $(\ref{2e6})$, we have that
\begin{eqnarray}
\label{3e13}
& & \sum_{i=1}^n\frac{\partial\theta^k}{\partial x_i}\sigma^i(t,X_t^*,Y_t^*)+\Hat\sigma^k(t,X_t^*,Y_t^*,Z_t^*)\\\nonumber
&=&\sum_{i=1}^n\frac{\partial\theta^k}{\partial x_i}[\sigma^i(t,X_t^*,Y_t^*)-\sigma^i(t,X_t^*,Y'_t)]+\\\nonumber
& &+[\Hat\sigma^k(t,X_t,Y_t^*,Z_t^*)-\Hat\sigma^k(t,X_t^*,Y_t',Z_t')].
\end{eqnarray}
Since $Y^*_T$ is equal to $Y'_T$, then $E(|Y'^k_T-{Y^*_T}^k|^2)=0$. We can infer from $(\ref{3e12})$ and $(\ref{3e13})$ that
\begin{eqnarray}
\label{3e14}
& &E|Y'^k_t-{Y^*_t}^k|^2\\\nonumber
&=&-E\int_{t}^T2(Y^k_s-{Y^*_s}^k)\{\sum_{i=1}^n\frac{\partial \theta^k}{\partial x_i}[b^i(s,X_s^*,Y_s^*)-b^i(s,X_s^*,Y'_s)]+\\\nonumber
& &+\sum_{i,j=1}^n\frac{\partial^2\theta^k}{\partial x_i\partial x_j}[a_{ij}(x,X_s^*,Y_s^*)-a_{ij}(x,X_s^*,Y_s')]+\\\nonumber
& &+[\Hat{b}^k(s,X_s^*,Y_s^*)-\Hat{b}^k(s,X_s^*,Y_s')]\}ds+\\\nonumber
& &-E\int_t^T\{\sum_{i=1}^n\frac{\partial\theta^k}{\partial x_i}[\sigma^i(s,X_s^*,Y_s^*)-\sigma^i(s,X_s^*,Y_s')]+\\\nonumber
& &+[\Hat\sigma^k(s,X_s^*,Y_s^*,Z_s^*)-\Hat\sigma^k(s,X_s^*,Y_s',Z_s')]\}^2ds.
\end{eqnarray}
Using (A1), we deduce from $(\ref{3e14})$ that there exists a positive constant $M_1$ such that
\begin{eqnarray}\nonumber
\label{3e15}
& &E(|Y'^k_t-{Y^*_t}^k|^2)+E\int_t^T[\Hat\sigma^k(s,X_s^*,Y_s^*,Z_s^*)-\Hat\sigma^k(t,X_s^*,Y_s^*,Z_s')]^2ds\\
&\leq &M_1E\int_t^T|Y_s'^k-{Y^*_s}^k|(|Y'_s-Y^*_s|+|Z'_s-Z^*_s|)ds,
\end{eqnarray}
which leads to
\begin{eqnarray}\nonumber
\label{3e16}
& &E(|Y'_t-Y^*_t|^2)+E\int_t^T|\Hat\sigma(s,X_s^*,Y_s^*,Z_s^*)-\Hat\sigma(s,X_s^*,Y_s^*,Z_s')|^2ds\\
&\leq &M_2E\int_t^T|Y'_s-Y^*_s|(|Y'_s-Y^*_s|+|Z'_s-Z^*_s|)ds,
\end{eqnarray}
where $M_2$ is a positive constant.
Condition $(\ref{2e4})$ implies that
\begin{eqnarray*}
|Z_t'-Z_t^*|&=&|\Hat\sigma^{-1}\Hat\sigma(t,X_t^*,Y_t^*,Z_t^*)-\Hat\sigma^{-1}\Hat\sigma(t,X_t^*,Y_t^*,Z'_t)|\\
& \leq &\lambda(|Y_t|)|\Hat\sigma(t,X_t^*,Y_t^*,Z_t^*)-\Hat\sigma(t,X_t^*,Y_t^*,Z_t')|^\alpha\\
& \leq &M_3|\Hat\sigma(t,X_t^*,Y_t^*,Z_t^*)-\Hat\sigma(t,X_t^*,Y_t^*,Z'_t)|^\alpha,
\end{eqnarray*}
where $M_3$ is a positive constant, since $\theta$ is uniformly bounded . 
\\Therefore
\begin{eqnarray}
\label{3e17}
& &E(|Y'_t-{Y^*_t}|^2)+M_4E\int_t^T|Z'_s-Z_s^*|^\frac{2}{\alpha}ds\\\nonumber
&\leq &M_2E\int_t^T|Y_s'-{Y^*_s}|(|Y_s'-{Y^*_s}|+|Z_s'-{Z^*_s}|)ds\\\nonumber
&\leq &M_2E\int_t^T|Y_s'-{Y^*_s}|^2ds+M_2E\int_t^T|Y_s'-{Y^*_s}||Z_s'-{Z^*_s}|ds\\\nonumber
&\leq &M_2E\int_t^T|Y_s'-{Y^*_s}|^2ds+\frac{M_5}{\epsilon^{\frac{2}{2-\alpha}}}E\int_t^T|Y_s'-{Y^*_s}|^{\frac{2}{2-\alpha}}ds\\\nonumber
& &+\epsilon^\frac{2}{\alpha} M_{6}E\int_t^T|Z_s'-Z^*_s|^\frac{2}{\alpha}ds.
\end{eqnarray}
where $M_4$, $M_5$, $M_6$ are positive constants and $\epsilon$ is a small constant to be chosen. Notice that the last inequality comes from Young's inequality. Choosing $\epsilon$ small enough, we get from $(\ref{3e17})$ that 
\begin{eqnarray}
\label{3e18}
E(|Y_t'-Y^*_t|^2)&\leq &M_7\int_t^T[E(|Y_s'-Y^*_s|^2)+E(|Y_s'-Y^*_s|^2)^{\frac{1}{2-\alpha}}]ds,
\end{eqnarray}
where $M_7$ is positive constant. 
\\\h Secondly, we will prove that $E(|Y_t'-Y^*_t|^2)$ is bounded and then deduce that $E(|Y_t'-Y^*_t|^2)$ is equal to $0$. Setting $G(t)$ to be $E(|Y_t'-Y^*_t|^2)$, we will establish a {\it New Gronwall Inequality}, to prove that $G(t)$ is bounded. We have the following inequality on $G$
\begin{eqnarray*}
G(t)&\leq &M_7\int_t^T[G(s)+G(s)^{\frac{1}{2-\alpha}}+1]ds.
\end{eqnarray*}
Now denoting $M_7\int_t^T[G(s)+G(s)^{\frac{1}{2-\alpha}}+1]ds$ by  $H(t)$, we deduce that $G(t)$ is less than or equal to $H(t)$; and as a result $H'(t)$ is bounded by $M_7[H(t)+H(t)^{\frac{1}{2-\alpha}}+1]$. This leads to 
\begin{eqnarray*}
\frac{H'(t)}{H(t)+H(t)^{\frac{1}{2-\alpha}}+1}&\leq &M_7.
\end{eqnarray*}
We define 
\begin{eqnarray*}
K(r)=\int_0^r\frac{1}{\rho+\rho^{\frac{1}{2-\alpha}}+1}d\rho.
\end{eqnarray*}
Since $(K(H(t)))'$ is bounded by $M_7$, then $$K(H(t))\leq M_{7}T+K(H(0))=M_8.$$ In addition, we know that $K$ is increasing, which means that $H(t)$ is less than or equal to $K^{-1}(M_8)$. We then infer that $G(t)$ is bounded by a postive constant $M_9$. Defining $M_9^{-1}G(t)$ by $P(t)$, we then deduce that $P(t)$ is positive and bounded by $1$. This leads to 
\begin{eqnarray*}
P(t)&\leq &M_{10}\int_t^T[P(s)+P(s)^{\frac{1}{2-\alpha}}]ds\leq M_{11}\int_t^TP(s)ds ,
\end{eqnarray*}
where $M_{10}$, $M_{11}$ are positive constants; notice that $1\leq\alpha<2$.
\\ By the classical Gronwall's Lemma, $P(t)$ is equal to $0$, which implies that $Y'_t$ is coincided with $Y^*_t$. From this, we infer that $Z'_t$ is coincided with $Z^*_t$.
\\\h Now, since every solution $(X_t^*,Y_t^*,Z_t^*)$ of $(\ref{2e2})$ can be written as $(X_t^*,$$\theta(t,X_t^*),$ $z(t,X_t^*,\theta(t,X_t^*)))$ and since $(\ref{2e5})$ has only one solution, we can conclude that $(\ref{2e2})$ has a unique solution.
\end{proof}
\section{The Parallel Four Step Domain Decomposition Scheme}
This section is devoted to the construction of the Parallel Four Step Domain Decomposition Scheme and its proof of well-posedness and stability.
\subsection{Definition of the Scheme}
We now define a new Parallel Four Step Domain Decomposition Scheme, based on the results obtained in Section 2. 
\\ {\it Step 1:} Find a smooth mapping $z$ satisfying $(\ref{2e6})$.
\\ {\it Step 2:} Choose $l$ to be a constant large enough and consider the domain $\mathcal{O}_l=(-l,l)^n$. On $\mathcal{O}_l$, consider the following problem instead of $(\ref{2e7})$
\begin{equation}
\label{2e8}
\left \{ \begin{array}{ll}\frac{\partial \theta^l}{\partial t}+\sum_{i,j=1}^na_{i,j}\frac{\partial^2 \theta^l}{\partial x_i\partial x_j}+<{\nabla\theta^l},b(t,x,\theta^l)>+\hat{b}(t,x,\theta^l)=0,\mbox{ in } (0,T)\times\mathcal{O}_l,\vspace{.1in}\\ 
\theta^l(t,x)=g(x),\mbox{ on } (0,T)\times\partial\mathcal{O}_l,\vspace{.1in}\\  
\theta^l(T,x)=g(x),\mbox{ on } \mathcal{O}_l.\end{array}\right. 
\end{equation}
The same arguments as in the proof of Proposition $\ref{2t2}$ show that $(\ref{2e8})$ has a unique classical solution $\theta^l(t,x)$, where $\theta^l(t,x)$, $\frac{\partial}{\partial t}\theta^l(t,x)$, $\nabla \theta^l(t,x)$, $\Delta\theta^l(t,x)$ are bounded. Suppose that $\theta^l(t,x)=g(x)$ on $(0,T)\times(\mathbb{R}^n\backslash\mathcal{O}_l)$, then these arguments also show that  
\begin{eqnarray*}
\mathop{\lim}_{l\to\infty}||\theta^l-\theta||_{(L^{\infty}((0,T)\times\mathbb{R}^{n}))^m}=0.
\end{eqnarray*}
\\ {\it Step 3:} Solve the equation $(\ref{2e8})$ iteratively in the following manner
\begin{itemize}
\item Divide $\mathcal{O}_l$ into $I$ subdomains $$\mathcal{O}_l=\bigcup_{p=1}^I\Omega_p=\bigcup_{p=1}^I(-l,l)^{n-1}\times (a_p,b_p),$$ where $-l=a_1<a_2<b_1<\dots<a_I<b_{I-1}<b_I=l.$ Denote that $S_i=b_{i}-a_{i+1}$ for $i$ belongs to $\{1,\dots,I-1\}$ and $L_i=b_i-a_i$ for $i$ belongs to $\{1,\dots,I\}$. 
\item Choose a bounded initial guess $\theta^l_0$ in $\mathbb{C}^{\infty}(\mathbb{R}^n)$ at step $0$. Associate each subdomain $\Omega_p$ with a function $\theta_{p,0}^l$ such that $\theta^l_{p,0}=\theta^l_0$ on $\Omega_p$.
\item Solve the following $p$-th subproblem at iteration $\#q$ 
\begin{equation}
\label{2e10}
\left \{ \begin{array}{ll}\frac{\partial \theta^l_{p,q}}{\partial t}+\sum_{i,j=1}^na_{i,j}\frac{\partial^2 \theta^l_{p,q}}{\partial x_i\partial x_j}+<{\nabla\theta^l_{p,q}},b(t,x,\theta^l_{p,q})>+\hat{b}(t,x,\theta^l_{p,q})=0,\mbox{ in } (0,T)\times\Omega_p,\vspace{.1in}\\ 
\theta^l_{p,q}(.,.,a_p)=\theta^l_{p-1,q-1}(.,.,a_p),\mbox{ on } (0,T)\times(-l,l)^{n-1}, \vspace{.1in}\\ 
\theta^l_{p,q}(.,.,b_p)=\theta^l_{p+1,q-1}(.,.,b_p),\mbox{ on } (0,T)\times(-l,l)^{n-1}, \vspace{.1in}\\ 
\theta^l_{p,q}(t,x)=g(t,x),\mbox{ on } (0,T)\times(\partial\mathcal{O}_l\backslash((0,T)\times(-l,l)^{n-1}\times(\{a_p\}\cup\{b_p\}))), \vspace{.1in}\\ 
\theta^l_{p,q}(T,x)=g(T,x),\mbox{ on } (-l,l)^{n-1}\times(a_p,b_p).\end{array}\right. 
\end{equation}
For the extreme subdomain $\Omega_1$ (resp. $\Omega_I$), we consider the boundary condition $\theta^l_{1,q}(t,x,a_1)=g(t,x)$ on the left (resp. $\theta^l_{I,q}(t,x,b_I)=g(t,x)$ on the right) in $(\ref{2e10})$. 
\item Suppose that we stop at the iteration $\#q$ while solving $(\ref{2e10})$. 
\end{itemize}
The following two theorems insist that Step 2 of the algorithm is well-posed and show that the solutions of the subproblems $(\ref{2e10})$ converge to the solution of the main problem $(\ref{2e8})$ when $q$ tends to infinity.
\begin{theorem} 
\label{2t4}
(Well-posedness Theorem) Suppose that $(A1)-(A5)$ hold, then at each iteration $\#q$ in each subdomain $\#p$, there exists a unique classical solution $\theta^l_{p,q}(t,x)$ for $(\ref{2e10})$, such that $\theta^l_{p,q}(t,x)$, $\frac{\partial}{\partial t}\theta^l_{p,q}(t,x)$, $\nabla\theta^l_{p,q}(t,x)$, $\Delta\theta^l_{p,q}(t,x)$ are bounded and the sequence $\{\theta^l_{p,q}\}_{p\in\{1,\dots,I\};q\in\mathbb{N}}$ is uniformly bounded (with respect to $p$ and $q$) in $C((0,T)\times\mathcal{O}_l)$.
\end{theorem}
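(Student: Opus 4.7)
The plan is to proceed by induction on the iteration index $q\in\mathbb{N}$, establishing at each step both (i) existence, uniqueness and boundedness of the derivatives of $\theta^l_{p,q}$ on every subdomain $\Omega_p$, and (ii) the uniform bound $\|\theta^l_{p,q}\|_\infty\leq M_0$, where $M_0:=\max(\|\theta^l_0\|_\infty,\|g\|_\infty)$. The base case $q=0$ is immediate since $\theta^l_{p,0}=\theta^l_0\in C^\infty(\mathbb{R}^n)\cap L^\infty$. For the inductive step, fix $q\geq 1$ and assume each $\theta^l_{p,q-1}$ is a classical solution of its subproblem, satisfies $\|\theta^l_{p,q-1}\|_\infty\leq M_0$, and lies in $C^{1+\delta',2+\delta'}$ up to the parabolic boundary. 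The lateral Dirichlet data of the $p$-th subproblem at iteration $q$, namely $\theta^l_{p-1,q-1}(\cdot,\cdot,a_p)$ and $\theta^l_{p+1,q-1}(\cdot,\cdot,b_p)$, is admissible because the slices $\{x_n=a_p\}$ and $\{x_n=b_p\}$ lie in the interior of the neighbouring subdomains (the ordering $a_{p-1}<a_p<b_{p-1}$ and $a_{p+1}<b_p<b_{p+1}$ is built into the decomposition), so its regularity is inherited from the interior estimates of the previous iterate, while the matching $\theta^l_{p\pm 1,q-1}(T,\cdot)=g$ automatically supplies the compatibility with the terminal data. I would then verify $(\ref{3e2})$--$(\ref{3e6})$ exactly as in the proof of Proposition $\ref{2t2}$ (from (A1)--(A4)) and invoke Lemma $\ref{3l1}$ on $\Omega_p$, producing a unique classical solution $\theta^l_{p,q}$ with $\theta^l_{p,q}$, $\partial_t\theta^l_{p,q}$, $\nabla\theta^l_{p,q}$, $\Delta\theta^l_{p,q}$ bounded on $(0,T)\times\Omega_p$ together with the H\"older regularity needed to continue the induction.

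To propagate the $L^\infty$ bound $M_0$ to $\theta^l_{p,q}$, I would apply a componentwise weak maximum principle with an $\varepsilon$-perturbation; this is where assumption (A5) becomes decisive. Fix $k\in\{1,\dots,m\}$, set $u:=\theta^{l,k}_{p,q}$, and for $\varepsilon>0$ consider $v:=u-M_0-\varepsilon(T-t)$, which satisfies
\begin{equation*}
\partial_t v+\sum_{i,j=1}^n a_{ij}\partial^2_{ij}v+\langle\nabla v,b(t,x,\theta^l_{p,q})\rangle=\varepsilon-\hat{b}^k(t,x,\theta^l_{p,q}).
\end{equation*}
If $v$ attained a positive interior maximum at $(t_0,x_0)\in(0,T)\times\Omega_p$, then $u(t_0,x_0)>M_0\geq 0$, hence (A5) gives $\hat{b}^k\leq 0$ at that point; on the other hand $\partial_t v=0$, $\nabla v=0$, and the positive definiteness of $(a_{ij})$ from (A2) together with the negative semidefiniteness of the spatial Hessian of $v$ force the left-hand side to be $\leq 0$, yielding $\hat{b}^k\geq\varepsilon>0$. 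The contradiction forces $v$ to take its supremum on the parabolic boundary, where $u$ equals either $g^k$ or one of $\theta^{l,k}_{p\pm 1,q-1}$, both bounded in modulus by $M_0$; thus $v\leq 0$ throughout, and letting $\varepsilon\downarrow 0$ gives $u\leq M_0$. The symmetric argument applied to $-u$ (using that the sign of $\hat{b}^k$ reverses for $y_k<0$, again by (A5)) yields $u\geq -M_0$, closing the induction.

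The principal obstacle I anticipate is precisely this maximum-principle step: since $\hat{b}^k$ depends on all components of the vector $\theta^l_{p,q}$, a direct scalar comparison argument is not available, and the sign structure encoded in (A5) together with the $\varepsilon$-perturbation is the only mechanism to exclude borderline interior extrema. A secondary technicality is to make sure that the H\"older exponent $\delta'$ supplied by Lemma $\ref{3l1}$ remains usable along the iteration; what is actually needed is only that the lateral data stays admissible for the lemma at each step, which follows from the interior-slice argument above combined with the uniform $L^\infty$ bound just established.
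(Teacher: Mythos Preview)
Your proposal is correct and follows essentially the same architecture as the paper's proof: induction on $q$, existence and regularity on each $\Omega_p$ via Lemma~\ref{3l1} (after checking $(\ref{3e2})$--$(\ref{3e6})$ from (A1)--(A4)), and a componentwise maximum-principle argument based on (A5) to propagate the uniform bound $M_0=\max(\|\theta^l_0\|_\infty,\|g\|_\infty)$. The only difference is in the execution of that last step: the paper rewrites the $k$-th scalar equation in the linear form $\partial_t\rho^k+\sum a_{ij}\partial^2_{ij}\rho^k+\langle\nabla\rho^k,b\rangle+c(t,x)\rho^k=0$ with $c(t,x)=\hat b^k(t,x,\rho)/\rho^k\leq 0$ (both clauses of (A5) being used here) and then quotes the standard linear maximum principle, whereas you run an equivalent $\varepsilon$-barrier argument by hand; your version is slightly more explicit about why the boundary data transmitted from the neighbouring subdomains is regular enough to feed back into Lemma~\ref{3l1}, a point the paper leaves implicit.
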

\begin{theorem}
\label{2t5}
Under Assumptions $(A1)-(A5)$, we have the convergence 
\begin{equation}
\label{2e11}
\mathop{\lim}_{q\to\infty}\mathop{\sup}_{p\in\{1,\dots,I\}}||\theta^l_{p,q}-\theta^l||_{(L^{\infty}((0,T)\times(-l,l)^{n-1}\times(a_p,b_p)))^m}=0.
\end{equation}
\end{theorem}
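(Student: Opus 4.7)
The strategy is to derive a uniform contraction estimate on the error $e^l_{p,q}(t,x) := \theta^l_{p,q}(t,x) - \theta^l(t,x)$ by combining a parabolic maximum principle with a spatial barrier that exploits the strict overlaps $S_p > 0$. Subtracting $(\ref{2e8})$ from $(\ref{2e10})$ and applying the mean-value theorem in the $y$-variable to the increments $a_{ij}(\cdot,\theta^l_{p,q}) - a_{ij}(\cdot,\theta^l)$, $b(\cdot,\theta^l_{p,q}) - b(\cdot,\theta^l)$ and $\hat{b}(\cdot,\theta^l_{p,q}) - \hat{b}(\cdot,\theta^l)$, I would first show that each component $e^{l,k}_{p,q}$ satisfies a linear parabolic equation
\begin{equation*}
\frac{\partial e^{l,k}_{p,q}}{\partial t} + \sum_{i,j=1}^n \tilde{a}_{ij}\, \frac{\partial^2 e^{l,k}_{p,q}}{\partial x_i \partial x_j} + \tilde{\mathbf{c}}^k \cdot \nabla e^{l,k}_{p,q} + \tilde{d}_{kk}\, e^{l,k}_{p,q} + \sum_{j \neq k} \tilde{d}_{kj}\, e^{l,j}_{p,q} = 0,
\end{equation*}
with coefficients uniformly bounded by $(A1)$, Proposition $\ref{2t2}$ and Theorem $\ref{2t4}$, with zero terminal condition, zero Dirichlet data on $\partial\mathcal{O}_l \cap \partial \Omega_p$, and interface data $e^{l}_{p\pm 1, q-1}$ on $\{x_n = a_p\}$ and $\{x_n = b_p\}$. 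The decisive observation is that by $(A5)$ the part of $\tilde{d}_{kk}$ inherited from the $k$-th variable of $\hat{b}^k$ is non-positive.

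Next I would build, on each subdomain $\Omega_p$, a scalar barrier
\begin{equation*}
\Phi_p(t, x_n) := e^{\beta(T-t)}\, \frac{\cosh\bigl(\mu(x_n - c_p)\bigr)}{\cosh(\mu L_p/2)}, \qquad c_p := \tfrac{a_p+b_p}{2},
\end{equation*}
with $\mu$ taken large enough to dominate the off-diagonal couplings $\tilde{d}_{kj}$ and $\beta$ large enough to turn $\Phi_p$ into a scalar supersolution for the system of absolute values $(|e^{l,k}_{p,q}|)_{k=1}^m$. The parabolic maximum principle applied to $\Phi_p \cdot M_{p,q-1} \pm e^{l,k}_{p,q}$, where $M_{p,q-1}$ denotes the supremum of $|e^{l}_{p\pm 1, q-1}|$ over the two interface faces $\{x_n = a_p\}$ and $\{x_n = b_p\}$, then yields
\begin{equation*}
\max_k |e^{l,k}_{p,q}(t,x)| \leq \Phi_p(t, x_n)\, M_{p,q-1} \qquad \forall\,(t,x) \in (0,T)\times \Omega_p.
\end{equation*}

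Finally I would exploit the overlaps by evaluating this bound at the interface $x_n = a_{p+1}$, which lies in the interior of $\Omega_p$ and is the face that $\Omega_{p+1}$ reads at the next iteration: since $|a_{p+1} - c_p| = L_p/2 - S_p < L_p/2$, the ratio $\cosh(\mu(a_{p+1}-c_p))/\cosh(\mu L_p/2)$ is strictly less than $1$, and an analogous bound holds at $x_n = b_{p-1}$ inside $\Omega_p$. Composing these contractions across a complete sweep of all $I$ subdomains and iterating in $q$ produces a geometric decay rate $\rho^{\lfloor q/I \rfloor}$ with some $\rho \in (0,1)$, which proves $(\ref{2e11})$.

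\textbf{Main obstacle.} The delicate point is the off-diagonal coupling $\tilde{d}_{kj}$: a purely componentwise maximum principle fails because each equation for $e^{l,k}_{p,q}$ involves every $e^{l,j}_{p,q}$. Producing a single scalar barrier $\Phi_p$ that simultaneously dominates every component requires choosing $\mu$ first, large enough to absorb the off-diagonal terms into the $\cosh$ weight, and then $\beta$ to compensate for any residual lower-order cost, and this is exactly where assumption $(A5)$ is essential: the non-positive diagonal sign it supplies is what makes such a comparison argument close.
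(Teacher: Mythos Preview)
Your plan is essentially correct and reaches the same conclusion, but the route is genuinely different from the paper's. The paper never works componentwise: it forms the single scalar quantity
\[
\Phi_{p,q}(t,x)=\sum_{k=1}^m\bigl(\epsilon^k_{p,q}\bigr)^2\exp\bigl(\beta(x_n-\omega)-\gamma t\bigr),\qquad \epsilon_{p,q}(t,x)=e_{p,q}(T-t,x),
\]
and shows that a suitable linear parabolic operator $\mathfrak L$ satisfies $\mathfrak L(\Phi_{p,q})\le 0$ once $\gamma$ is large. The cross-coupling you worry about is handled automatically, because after Young's inequality every Lipschitz increment of $a_{ij}$, $b$, $\hat b$ contributes at most a constant multiple of $\sum_k(\epsilon^k_{p,q})^2$, which the large $\gamma$ swallows. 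Instead of a symmetric $\cosh$ barrier, the paper uses one-sided weights $\exp(\beta(x_n-\omega))$ with $\omega\in\{a_p,b_p\}$ and alternating signs of $\beta$, then chains the resulting interface estimates through the subdomains with a telescoping choice of constants $\beta_j$, obtaining $\bar E_{n}\le \bar E_0\exp(-n\bar\epsilon)$. Your $\cosh$ barrier treats both interfaces simultaneously and is arguably neater; the paper's sum-of-squares device sidesteps the simultaneous comparison issue you flag.

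Two small points worth correcting in your write-up. First, it is the time-exponential parameter (your $\beta$, the paper's $\gamma$) that absorbs the bounded zeroth-order coupling, not $\mu$; enlarging $\mu$ improves the contraction ratio at the interfaces but does nothing to control $\tilde d_{kj}$. Second, $(A5)$ is not what closes the convergence argument: in the paper it is invoked only in Theorem~\ref{2t4} to guarantee uniform boundedness of the iterates, and the proof of Theorem~\ref{2t5} uses only $(A1)$ together with that uniform bound. The favourable diagonal sign you extract from $(A5)$ is harmless but unnecessary here. Finally, your phrase ``maximum principle applied to $\Phi_p\cdot M_{p,q-1}\pm e^{l,k}_{p,q}$'' needs a first-touching (or $\max_k$) argument to be literally correct, since the $k$-th equation sees the other components; this is routine but should be stated.
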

{\it Step 4:} We will continue with the values $\theta^l_{p,q}$, $p\in\{1,\dots,I\}$ that we have got at the end of step 2.
\begin{itemize}
\item Let $\theta^l_q$ be a function defined on $[0,T]\times\mathcal{O}_l$ such that $\theta^l_q(t,x)=\theta^l_{p,q}(t,x)$ on $[0,T]\times(\Omega_p\backslash(\Omega_{p-1}\cup\Omega_{p+1}))$ for $p\in\{2,\dots,I-1\}$, on $[0,T]\times(\Omega_p\backslash\Omega_{p-1})$ for $p=I$, and on $[0,T]\times(\Omega_p\backslash\Omega_{p+1})$ for $p=1$. We can choose $\theta_q^l(t,x)$ such that it is Lipschitz, differentiable with respect to $x$ and $t$ in $\mathbb{R}^n$ and  $\mathbb{R}$ and
\begin{equation}
\label{2e12}
\mathop{\lim}_{q\to\infty}||\theta^l_{q}-\theta^l||_{(L^{\infty}([0,T]\times\mathcal{O}_l))^m}=0.
\end{equation}
\item Use $\theta^l_q$, solve the following forward SDE
\begin{equation}
\label{2e13}
X_t^q=x+\int_{0}^t\bar{b}_q(s,X_s^q)ds+\int_{0}^t\bar{\sigma}_q(s,X_s^q)d W_s,
\end{equation}
where $\bar{b}_q $ is $b(t,x,\theta^l_q(t,x))$ and $\bar{\sigma}_q(t,x)$ is ${\sigma}(t,x,\theta^l_q(t,x))$. 
\\ Using the same arguments as the ones used for $(\ref{2e5})$, we can conclude that $(\ref{2e13})$ has a unique solution in $\mathbb{L}^2(0,T)$.
\end{itemize}
Set $Y_t^{q,l}$=$\theta_q^l(t,X_t^{q,l})$ and $Z_t^{q,l}=z(t,X_t^{q,l},\theta_q^l(t,X_t^{q,l}),\nabla\theta_q^l(t,X_t^{q,l}))$. The following Theorem says that the sequence $(X_t^{q,l},Y_t^{q,l},Z_t^{q,l})$ converges to $(X_t,$ $Y_t,$ $Z_t)$ as $q$ and $l$ tend to infinity.
\begin{theorem}\label{2t6} (Convergence Theorem)
Suppose that all the assumptions in Section 2.1 hold, then as $q$ and $l$ tend to infinity, $(X_t^{q,l},Y_t^{q,l},Z_t^{q,l})$ converges to the solution $(X_t,Y_t,Z_t)$ of $(\ref{2e1})$ in the following sense
$$\mathop{\lim}_{l\to\infty}\mathop{\lim}_{q\to\infty}\int_0^TE(|X_t^{q,l}-X_t|^2)dt=0,$$
$$\mathop{\lim}_{l\to\infty}\mathop{\lim}_{q\to\infty}\int_0^TE(|Y_t^{q,l}-Y_t|^2)dt=0,$$ 
$$\mathop{\lim}_{l\to\infty}\mathop{\lim}_{q\to\infty}\int_0^TE(|Z_t^{q,l}-Z_t|^\frac{2}{\alpha})dt=0.$$
\end{theorem}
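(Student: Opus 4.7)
The strategy is to pass to the limit in two stages, exploiting the iterated order of the limits in the statement. For fixed $l$, I first compare $(X^{q,l},Y^{q,l},Z^{q,l})$ with the triple $(X^l,Y^l,Z^l)$ associated with the exact solution $\theta^l$ of $(\ref{2e8})$, in which $X^l$ solves the SDE with drift $b(t,x,\theta^l)$ and diffusion $\sigma(t,x,\theta^l)$, $Y^l_t = \theta^l(t,X^l_t)$, and $Z^l_t = z(t,X^l_t,\theta^l(t,X^l_t),\nabla\theta^l(t,X^l_t))$. Then I let $l\to\infty$ and compare $(X^l,Y^l,Z^l)$ with $(X,Y,Z)$, relying on $\|\theta^l-\theta\|_{L^\infty}\to 0$ from Step 2. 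The two stages are structurally identical, so I describe only the first in detail.

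For the convergence of $X^{q,l}$, I would subtract the defining SDEs, add and subtract $b(s,X_s^{q,l},\theta^l(s,X_s^{q,l}))$ inside the drift integrand (and similarly for $\sigma$), and then invoke the Lipschitz continuity of $b,\sigma$ from (A1) together with the Lipschitz regularity of $\theta^l$ from Proposition $\ref{2t2}$. Using It\^o's isometry on the stochastic part, one arrives at
\begin{equation*}
E|X_t^{q,l}-X_t^l|^2 \;\leq\; C\int_0^t E|X_s^{q,l}-X_s^l|^2\,ds \;+\; C\,T\,\|\theta_q^l-\theta^l\|_{L^\infty}^2,
\end{equation*}
and Gronwall's lemma combined with (\ref{2e12}) yields $\sup_{t\in[0,T]}E|X_t^{q,l}-X_t^l|^2\to 0$ as $q\to\infty$. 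The convergence of $Y^{q,l}$ then follows by the triangle inequality
\begin{equation*}
|Y_t^{q,l}-Y_t^l|\;\leq\;\|\theta_q^l-\theta^l\|_{L^\infty}\;+\;\mathrm{Lip}(\theta^l)\,|X_t^{q,l}-X_t^l|,
\end{equation*}
each term of which tends to zero, the first by (\ref{2e12}) and the second in $L^2$ by the previous step.

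The main obstacle is the convergence of $Z^{q,l}$, since the formula $Z^{q,l}_t = z(t,X_t^{q,l},\theta^l_q(t,X_t^{q,l}),\nabla\theta^l_q(t,X_t^{q,l}))$ depends on $\nabla\theta^l_q$, whereas (\ref{2e12}) provides only $L^\infty$-convergence of $\theta^l_q$ itself. To upgrade this, I would combine the uniform bounds on $\theta^l_q$, $\partial_t\theta^l_q$, $\nabla\theta^l_q$, $\Delta\theta^l_q$ from Theorem $\ref{2t4}$ with interior parabolic Schauder estimates applied subdomain by subdomain; this gives a uniform $C^{1+\delta',2+\delta'}$-bound on compact subsets of $(0,T)\times\mathcal{O}_l$, whence Arzel\`a--Ascoli extracts uniform convergence of $\nabla\theta^l_q\to\nabla\theta^l$ on compacts (any cluster point must equal $\nabla\theta^l$ by the $L^\infty$-convergence of $\theta^l_q$). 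Differentiating the algebraic identity $\xi\sigma+\hat\sigma(z)=0$ and using the H\"older-type bound (\ref{2e4}) for $\hat\sigma^{-1}$, one obtains pointwise
\begin{equation*}
|Z_t^{q,l}-Z_t^l|\;\leq\;\lambda(|Y^{q,l}_t|)\bigl|(\nabla\theta^l_q-\nabla\theta^l)\,\sigma\bigr|^{\alpha}\;+\;(\text{Lipschitz terms in }X,Y),
\end{equation*}
which after raising to the power $2/\alpha$, taking expectation and integrating in $t$, combined with the dominated convergence theorem and the uniform bounds, gives $\int_0^T E|Z_t^{q,l}-Z_t^l|^{2/\alpha}\,dt\to 0$.

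The outer passage $l\to\infty$ is handled in the same way: $\|\theta^l-\theta\|_{L^\infty}\to 0$ from Step 2 drives an analogous Gronwall argument for $X^l\to X$; the same Schauder-compactness argument, now applied to the family $\{\theta^l\}_l$ whose uniform bounds are provided by Proposition $\ref{2t2}$, yields $\nabla\theta^l\to\nabla\theta$ locally uniformly, and hence the required convergence of $Y^l$ and $Z^l$. The technical crux will be justifying the compactness of the gradients $\nabla\theta^l_q$ rigorously; this is where Assumption (A5), which underlies the uniform bounds in Theorem $\ref{2t4}$, plays its decisive role.
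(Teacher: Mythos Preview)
Your handling of $X$ and $Y$ is essentially what the paper does (the paper merges the two stages by using $\|\theta^{q,l}-\theta\|_{L^\infty}\to 0$ directly in a single Gronwall argument, but this is only an organizational difference).

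For $Z$ you take a genuinely different route from the paper, and yours is the harder one. You attack $Z$ through the explicit representation $Z=z(t,X,\theta,\nabla\theta)$, which forces you to prove $\nabla\theta_q^l\to\nabla\theta^l$ and $\nabla\theta^l\to\nabla\theta$. The paper never touches gradients: it subtracts the backward equations for $Y$ and $Y^{q,l}$,
\[
Y_t-Y_t^{q,l}=\int_t^T\bigl[\hat b(s,X_s,Y_s)-\hat b(s,X_s^{q,l},Y_s^{q,l})\bigr]\,ds+\int_t^T\bigl[\hat\sigma(s,X_s,Y_s,Z_s)-\hat\sigma(s,X_s^{q,l},Y_s^{q,l},Z_s^{q,l})\bigr]\,dW_s,
\]
squares, takes expectation, and uses It\^o's isometry to bound $E\int_t^T|\hat\sigma(\cdot,Z_s)-\hat\sigma(\cdot,Z_s^{q,l})|^2\,ds$ by quantities involving only $E|Y_t-Y_t^{q,l}|^2$ and $E\int|X-X^{q,l}|^2+|Y-Y^{q,l}|^2$, all already shown to vanish. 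Condition~(\ref{2e4}) then converts the $\hat\sigma$-difference into the desired $|Z-Z^{q,l}|^{2/\alpha}$ bound. This is purely probabilistic and requires no PDE regularity beyond what is already established.

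Your route is not unreasonable, but the ``technical crux'' you flag is a real obstacle rather than a formality: the patched function $\theta_q^l$ of Step~4 is only asserted to be Lipschitz and once differentiable, and your Schauder/Arzel\`a--Ascoli argument for equicontinuity of $\nabla\theta_q^l$ works cleanly only on compacta interior to the individual subdomains, not across the gluing zones. Making this uniform would require additional work that the paper's BSDE argument avoids altogether. The payoff of the paper's approach is precisely that convergence of $Z$ is inherited from convergence of $X$ and $Y$ through the stochastic integral, with no separate gradient analysis.
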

\subsection{Proof of Theorem $\ref{2t4}$ }
\h First of all, we introduce some useful notations which will be needed for the proof. We set $$M_0=\max\{||\theta^l_0||_{C([0,T]\times\mathbb{R}^n)},||g||_{C(\mathbb{R}^n)}\},$$ and define $\rho_{p,q}(t,x)$ to be $\theta^l_{p,q}(T-t,x)$ for $p\in\{1,\dots,I\}$ and $q\in\mathbb{N}$. 
\\ We can reformulate Systems $(\ref{2e10})$ into the following form
\begin{equation}
\label{4e1}
\left \{ \begin{array}{ll}-\frac{\partial \rho_{p,q}}{\partial t}+\sum_{i,j=1}^na_{i,j}\frac{\partial^2 \rho_{p,q}}{\partial x_i\partial x_j}+<{\nabla\rho_{p,q}},b(t,x,\rho_{p,q})>+\hat{b}(t,x,\rho_{p,q})=0,\mbox{ in } (0,T)\times\Omega_p,\vspace{.1in}\\ 
\rho_{p,q}(.,.,a_p)=\rho_{p-1,q-1}(.,.,a_p),\mbox{ on } (0,T)\times(-l,l)^{n-1}, \vspace{.1in}\\ 
\rho_{p,q}(.,.,b_p)=\rho_{p+1,q-1}(.,.,b_p),\mbox{ on } (0,T)\times(-l,l)^{n-1}, \vspace{.1in}\\ 
\rho_{p,q}(t,x)=g(t,x),\mbox{ on } (0,T)\times(\partial\mathcal{O}_l\backslash((-l,l)^{n-1}\times(\{a_p\}\cup\{b_p\}))), \vspace{.1in}\\ 
\rho_{p,q}(0,x)=g(0,x),\mbox{ on } (-l,l)^{n-1}\times(a_p,b_p).\end{array}\right. 
\end{equation}
One can see that $(\ref{4e1})$ are parabolic systems with the initial condition $g$.
\\\h Now, we will prove the Theorem by induction. 
\\\h At step $\#1$, and in the $p$-th subdomain, using the same argument as in Theorem $\ref{2t1}$, we can prove that $(\ref{2e10})$ admits a unique classical solution $\theta^l_{p,1}(t,x)$, where $\theta^l_{p,1}(t,x)$, $\frac{\partial}{\partial t}\theta^l_{p,1}(t,x)$, $\nabla\theta^l_{p,1}(t,x)$, $\Delta\theta^l_{p,1}(t,x)$ are bounded. Consider the following $k$-th equation of $(\ref{4e1})$, for $k$ in $\{1,\dots,m\}$
\begin{equation}
\label{4e2}
\left \{ \begin{array}{ll}-\frac{\partial \rho^k_{p,1}}{\partial t}+\sum_{i,j=1}^na_{i,j}\frac{\partial^2 \rho^k_{p,1}}{\partial x_i\partial x_j}+<{\nabla\rho^k_{p,1}},b(t,x,\rho_{p,1})>+\hat{b}^k(t,x,\rho_{p,1})=0,\mbox{ in } (0,T)\times\Omega_p,\vspace{.1in}\\ 
\rho^k_{p,1}(.,.,a_p)=\rho^k_{p-1,0}(.,.,a_p),\mbox{ on } (0,T)\times(-l,l)^{n-1}, \vspace{.1in}\\ 
\rho^k_{p,1}(.,.,b_p)=\rho^k_{p+1,0}(.,.,b_p),\mbox{ on } (0,T)\times(-l,l)^{n-1}, \vspace{.1in}\\ 
\rho^k_{p,1}(t,x)=g^k(t,x),\mbox{ on } (0,T)\times(\partial\mathcal{O}_l\backslash((-l,l)^{n-1}\times(\{a_p\}\cup\{b_p\}))), \vspace{.1in}\\ 
\rho^k_{p,1}(0,x)=g^k(0,x),\mbox{ on } (-l,l)^{n-1}\times(a_p,b_p).\end{array}\right. 
\end{equation}
Using $(A5)$, we deduce from $(\ref{4e2})$ that
\begin{eqnarray*}
-\frac{\partial \rho^k_{p,1}}{\partial t}+\sum_{i,j=1}^na_{i,j}\frac{\partial^2 \rho^k_{p,1}}{\partial x_i\partial x_j}+<{\nabla\rho^k_{p,1}},b(t,x,\rho_{p,1})>+c(t,x)\rho^k_{p,1}=0,
\end{eqnarray*}
where 
\begin{eqnarray*}
c(t,x)=\left \{ \begin{array}{ll}\frac{\hat{b}^k(t,x,\rho_{p,1})-\hat{b}^k(t,x,\rho^1_{p,1},\dots,\rho^{k-1}_{p,1},0,\rho^{k+1}_{p,1},\dots,\rho^{m}_{p,1})}{\rho^k_{p,1}} &\mbox{ if }\rho^k_{p,1}\ne 0 ,\vspace{.1in}\\ 
0&\mbox{ otherwise }.\end{array}\right. 
\end{eqnarray*}
Since $c(t,x)$ is negative, by applying the maximum principle (see \cite{Evans:1998:PDE}) to this equation, we can see that the maximum and minimum of $\rho^k_{p,1}$ can be obtained on the boundaries, for all $k$ in $\{1,\dots,m\}$. This means that $||\rho_{p,1}||_{C([0,T]\times\Omega_p)}$ is bounded by $M_0$, and then $M_0$ is also an upper bound of $||\theta^l_{p,1}||_{C([0,T]\times\Omega_p)}$.
\\\h Suppose that up to step $q_0$, the unique classical solution $\theta^l_{p,q_0}(t,x)$ exists, $\theta^l_{p,q_0}(t,x)$, $\frac{\partial}{\partial t}\theta^l_{p,q_0}(t,x)$, $\nabla\theta^l_{p,q_0}(t,x)$, $\Delta\theta^l_{p,q_0}(t,x)$ are bounded, and for all $p$ in $\{1,\dots,I\}$, $||\theta^l_{p,q_0}||_{C([0,T]\times\Omega_p)}$ is bounded by $M_0$. We will show that the conclusion is still correct for the step $q_0+1$. The existence and uniqueness of $\theta^l_{p,q_0+1}$ can be infered by using the same argument as in step $\# 1$ and Theorem $\ref{2t1}$. Now, we consider the following equation, for $k$ in $\{1,\dots,m\}$,
\begin{equation}
\label{4e3}
\left \{ \begin{array}{ll}-\frac{\partial \rho^k_{p,q_0+1}}{\partial t}+\sum_{i,j=1}^na_{i,j}\frac{\partial^2 \rho^k_{p,q_0+1}}{\partial x_i\partial x_j}+<{\nabla\rho^k_{p,q_0+1}},b(t,x,\rho_{p,q_0+1})>+\vspace{.1in}\\ 
~~~~~~~~~~~~~~~~~~~~~~~~~~~~~~~~+\hat{b}^k(t,x,\rho_{p,q_0+1})=0,\mbox{ in } (0,T)\times\Omega_p,\vspace{.1in}\\ 
\rho^k_{p,q_0+1}(.,.,a_p)=\rho^k_{p-1,q_0}(.,.,a_p),\mbox{ on } (0,T)\times(-l,l)^{n-1}, \vspace{.1in}\\ 
\rho^k_{p,q_0+1}(.,.,b_p)=\rho^k_{p+1,q_0}(.,.,b_p),\mbox{ on } (0,T)\times(-l,l)^{n-1}, \vspace{.1in}\\ 
\rho^k_{p,q_0+1}(t,x)=g^k(t,x),\mbox{ on } (0,T)\times(\partial\mathcal{O}_l\backslash((0,T)\times(-l,l)^{n-1}\times(\{a_p\}\cup\{b_p\}))), \vspace{.1in}\\ 
\rho^k_{p,q_0+1}(0,x)=g^k(0,x),\mbox{ on } (-l,l)^{n-1}\times(a_p,b_p).\end{array}\right. 
\end{equation}
Again, by a maximum principle argument applied to Equation $(\ref{4e3})$, we can see that the maximum and mininum of $\rho^k_{p,q_0+1}$ can only be obtained on the boundaries, for all $k$ in $\{1,\dots,m\}$. However, we know that $||\theta^l_{p,q_0+1}||_{C([0,T]\times\Omega_p)}$ is bounded by $M_0$ for all $p$ in $\{1,\dots,I\}$, from the definition of $M_0$. We then deduce that $||\theta^l_{p,q_0+1}||_{C([0,T]\times\Omega_p)}$ is bounded by $M_0$. 
\\\h We then conclude that at each step $\#q$ in each subdomain $\#p$, there exists a unique classical solution $\theta_{p,q}(t,x)$ for $(\ref{2e10})$ and the sequence $\{\theta^l_{p,q}\}_{p\in\overline{1,I};q\in\mathbb{N}}$ is uniformly bounded with respect to $p$ and $q$ in $C((0,T)\times\mathcal{O}_l)$ by $M_0$. This concludes the proof.
\subsection{Proof of Theorem $\ref{2t5}$ }
 We divide the proof into 2 steps
\\ {\it Step 1: An exponential decay estimate}
\\\h Setting $e_{p,q}$ to be $\theta^l_{p,q}-\theta^l$, we deduce the system
\begin{equation}
\label{4e4}
\left \{ \begin{array}{ll}\frac{\partial e_{p,q}}{\partial t}+\sum_{i,j=1}^na_{i,j}(t,x,\theta^l_{p,q})\frac{\partial^2 e_{p,q}}{\partial x_i\partial x_j}+<{\nabla e_{p,q}},b(t,x,\theta^l_{p,q})>+c(t,x,\theta^l_{p,q},\theta^l)=0,\vspace{.1in}\\
~~~~~~~~~~~~~~~~~~~~~~~~~~~~~~~~~~~~~~~~~~~~~~~~~~~~~~~~~~~~~~~~~~~~~~~~~~~~~~~\mbox{ in } (0,T)\times\Omega_p,\vspace{.1in}\\ 
e_{p,q}(.,.,a_p)=e_{p-1,q-1}(.,.,a_p),\mbox{ on } (0,T)\times(-l,l)^{n-1}, \vspace{.1in}\\ 
e_{p,q}(.,.,b_p)=e_{p+1,q-1}(.,.,b_p),\mbox{ on } (0,T)\times(-l,l)^{n-1}, \vspace{.1in}\\ 
e_{p,q}(t,x)=0,\mbox{ on } (0,T)\times(\partial\mathcal{O}_l\backslash((0,T)\times(-l,l)^{n-1}\times(\{a_p\}\cup\{b_p\}))), \vspace{.1in}\\ 
e_{p,q}(T,x)=0,\mbox{ on } (-l,l)^{n-1}\times(a_p,b_p),\end{array}\right. 
\end{equation}
where
\begin{eqnarray*}
c(t,x,\theta^l_{p,q},\theta^l)&=&\left[\sum_{i,j=1}^n[a_{i,j}(t,x,\theta^l_{p,q})-a_{i,j}(t,x,\theta^l)]\frac{\partial^2 \theta^l}{\partial x_i\partial x_j}\right]\\
& &+<{\nabla\theta^l},[b(t,x,\theta^l_{p,q})-b(t,x,\theta^l)]>+[\hat{b}(t,x,\theta^l_{p,q})-\hat{b}(t,x,\theta^l)].
\end{eqnarray*}
Now, defining $\epsilon_{p,q}(t,x,y,z)$ to be $e_{p,q}(T-t,x,y,z)$, we change the system into
\begin{equation}
\label{4e5}
\left \{ \begin{array}{ll}\frac{\partial \epsilon_{p,q}}{\partial t}-\sum_{i,j=1}^na_{i,j}(t,x,\theta^l_{p,q})\frac{\partial^2 \epsilon_{p,q}}{\partial x_i\partial x_j}-<{\nabla \epsilon_{p,q}},b(t,x,\theta^l_{p,q})>-c(t,x,\theta^l_{p,q},\theta^l)=0,\vspace{.1in}\\
~~~~~~~~~~~~~~~~~~~~~~~~~~~~~~~~~~~~~~~~~~~~~~~~~~~~~~~~~~~~~~~~~~~~~~~~~~~~~~\mbox{ in } (0,T)\times\Omega_p,\vspace{.1in}\\ 
\epsilon_{p,q}(.,.,a_p)=\epsilon_{p-1,q-1}(.,.,a_p),\mbox{ on } (0,T)\times(-l,l)^{n-1}, \vspace{.1in}\\ 
\epsilon_{p,q}(.,.,b_p)=\epsilon_{p+1,q-1}(.,.,b_p),\mbox{ on } (0,T)\times(-l,l)^{n-1}, \vspace{.1in}\\ 
\epsilon_{p,q}(t,x)=0,\mbox{ on } (0,T)\times(\partial\mathcal{O}_l\backslash((-l,l)^{n-1}\times(\{a_p\}\cup\{b_p\}))), \vspace{.1in}\\ 
\epsilon_{p,q}(0,x)=0,\mbox{ on } (-l,l)^{n-1}\times(a_p,b_p).\end{array}\right. 
\end{equation}
\h We define $$\Phi_{p,q}(t,x)=\sum_{k=1}^m(\epsilon^k_{p,q})^2\exp(\beta(x_n-\omega)-\gamma t),$$ where $\beta$, $\omega$, $\gamma$ will be fixed below, and consider the following parabolic operator
\begin{eqnarray}
\label{4e6}
\begin{split}
\mathfrak{L}(\Phi)=\frac{\partial\Phi}{\partial t}-<\nabla\Phi,b(t,x,\theta^l)>-\sum_{i,j=1}^na_{ij}\frac{\partial^2\Phi}{\partial x_ix_j}+\sum_{i=1}^{n}\beta a_{i,n}\frac{\partial\Phi}{\partial x_i}.
\end{split}
\end{eqnarray}
A direct computation gives
\begin{eqnarray}
\label{4e7}
& &\mathfrak{L}(\Phi_{p,q})\\\nonumber
&=& \sum_{k=1}^m(-\gamma-\beta b^n(t,x,\theta^l_{p,q})+\beta a_{n,n}-\beta^2a_{n,n})(\epsilon^k_{p,q})^2\exp(\beta(x_n-\omega)-\gamma)\\\nonumber
& &+2\epsilon^k_{p,q}\left[-<\nabla\epsilon^k_{p,q},b(t,x,\theta^l_{p,q})>-\sum_{i,j=1}^na_{i,j}\epsilon_{p,q}^k+\frac{\partial\epsilon^k_{p,q}}{\partial t}\right]\\\nonumber
& &-\sum_{i,j=1}^n2a_{i,j}(t,x,\theta^l_{p,q})\frac{\partial\epsilon^k_{p,q}}{\partial x_i}\frac{\partial\epsilon^k_{p,q}}{\partial x_j}\\\nonumber
&\leq& \sum_{k=1}^m\left[(-\gamma-\beta b^n(t,x,\theta^l_{p,q})-\beta^2a_{n,n})(\epsilon^k_{p,q})^2\exp(\beta(x_n-\omega)-\gamma)\right.\\\nonumber
& &\left.+2\epsilon^k_{p,q}c^k(t,x,\theta^l_{p,q},\theta^l)\right],
\end{eqnarray}
where $c^k$ is the $k$-th component of the vector $c$.
\\ We consider the following term of $(\ref{4e7})$
\begin{eqnarray}
\label{4e8}\nonumber
A&=& \sum_{k=1}^m\{(-\gamma-\beta b^n(t,x,\theta^l_{p,q})+\beta a_{n,n}-\beta^2a_{n,n})(\epsilon^k_{p,q})^2+2\epsilon^k_{p,q}c^k(t,x,\theta^l_{p,q},\theta^l)\}\\\nonumber
&=&\sum_{k=1}^m\{(-\gamma-\beta b^n(t,x,\theta^l_{p,q})+\beta a_{n,n}-\beta^2a_{n,n})(\epsilon^k_{p,q})^2\\\nonumber
& &+2\epsilon^k_{p,q} [\sum_{i,j=1}^n[a_{i,j}(t,x,\theta^l_{p,q})-a_{i,j}(t,x,\theta^l)]\frac{\partial^2 \theta^{l,k}}{\partial x_i\partial x_j}\\\nonumber
&+&<{\nabla \theta^{l,k}},[b(t,x,\theta^l_{p,q})-b(t,x,\theta^l)]>+[\hat{b}(t,x,\theta^l_{p,q})-\hat{b}(t,x,\theta^l)]]\}\\\nonumber
&\leq& \{\sum_{k=1}^m(-\gamma-\beta b^n(t,x,\theta^l_{p,q})+\beta a_{n,n}-\beta^2a_{n,n})(\epsilon^k_{p,q})^2\\\nonumber
&+&N_1(\epsilon^k_{p,q})^2||\Delta \theta^l||_{C(\mathbb{R}^n)}+N_2(\epsilon^k_{p,q})^2||\nabla\theta^l||_{C(\mathbb{R}^n)}+N_3(\epsilon^k_{p,q})^2\}\\
&\leq& \sum_{k=1}^m\{(-\gamma-\beta b^n(t,x,\theta^l_{p,q})+\beta a_{n,n}-\beta^2a_{n,n})(\epsilon^k_{p,q})^2+N_4(\epsilon^k_{p,q})^2\},
\end{eqnarray}
where $N_1$, $N_2$, $N_3$, $N_4$ are constants depending only on the coefficients of the system and the bound $M_0$ of $\theta^l_{p,q}$ and $g$ in $C(\mathbb{R}^n)$. Since all solutions of the subproblems $\{\theta^l_{p,q}\}$ are uniformly bounded, $A$ is negative when $\gamma$ is large enough and $\beta$ is suitable chosen. This implies that $\mathfrak{L}(\Phi_{p,q})$ is negative. According to the maximum principle, the maximum of $\Phi_{p,q}$ can only be attained on the boundary of the domain. Which means that the maximum of 
\begin{eqnarray*}\sum_{k=1}^m(e^k_{p,q})^2\exp(\beta(x_n-\omega)-\gamma t)\end{eqnarray*} can only be attained on $\{0\}\times\mathbb{R}^{n-1}\times[a_p,b_p]$, on $(0,T)\times(\partial\mathcal{O}_l\backslash((-l,l)^{n-1}\times(\{a_p\}\cup\{b_p\})))$ or on $([0,T]\times\mathbb{R}^{n-1}\times\{a_p\})\cup ([0,T]\times\mathbb{R}^{n-1}\times\{b_p\})$. 
\\ Since $\Phi_{p,q}(t,x)$ is equal to $0$ on $\{0\}\times\mathbb{R}^{n-1}\times[a_p,b_p]$ and on $(0,T)\times(\partial\mathcal{O}_l\backslash((-l,l)^{n-1}\times(\{a_p\}\cup\{b_p\})))$, we have the following cases:
\\ If $1<p<I$, 
\begin{eqnarray}
\label{4e9}
& &\sum_{k=1}^m(e_{p,q}^k(t,x))^2\exp(\beta(x_n-\omega)-\gamma t)\\\nonumber
&\leq& \max\left\{\max_{(t,x)\in[0,T]\times[-l,l]^{n-1}\times\{a_p\}}\sum_{k=1}^m(e_{p,q}^k(t,x))^2\exp(\beta(a_p-\omega)-\gamma t)\right.,\\\nonumber
& &\left.\max_{(t,x)\in[0,T]\times[-l,l]^{n-1}\times\{b_p\}}\sum_{k=1}^m(e_{p,q}^k(t,x))^2\exp(\beta(b_p-\omega)-\gamma t)\right\}.
\end{eqnarray}
If $p=1$
\begin{eqnarray}
\label{4e10}
& &\sum_{k=1}^m(e_{1,q}^k(t,x))^2\exp(\beta(x_n-\omega)-\gamma t)\\\nonumber
&\leq& \max_{(t,x)\in[0,T]\times[-l,l]^{n-1}\times\{b_1\}}\sum_{k=1}^m(e_{1,q}^k(t,x))^2\exp(\beta(b_1-\omega)-\gamma t).
\end{eqnarray}
\\ If $p=I$
\begin{eqnarray}
\label{4e11}
& &\sum_{k=1}^m(e_{I,q}^k(t,x))^2\exp(\beta(x_n-\omega)-\gamma t)\\\nonumber
&\leq& \max_{(t,x)\in[0,T]\times[-l,l]^{n-1}\times\{a_I\}}\sum_{k=1}^m(e_{I,q}^k(t,x))^2\exp(\beta(a_I-\omega)-\gamma t).
\end{eqnarray}
\\ {\it Step 2: Proof of the Convergence}
{\\\h\bf Step 2.1:} Estimate of the right boudaries of the sub-domains.
\\\h For $x$ in $[-l,l]^n$, we denote $x$ by $(X,x_n)$, where $X\in[-l,l]^{n-1}$ and $x_n\in[-l,l]$. Moreover, we define
$$E_q=\max_{p\in\{1,\dots,I\}}\left\{\max_{(t,x)\in[0,T]\times[-l,l]^n}\sum_{k=1}^m(e_{p,q}^k(t,x))^2\exp(-\gamma t)\right\}.$$
\\ Consider the $I$-th domain, at the $q$-th step, we can see that $(\ref{4e11})$ infers
\begin{eqnarray*}
& &\sum_{k=1}^m(e_{I,q}^k(t,X,x_n))^2\exp(\beta(x_n-a_I)-\gamma t)\\\nonumber
&\leq& \max_{(t,X)\in[0,T]\times[-l,l]^{n-1}}\sum_{k=1}^m(e_{I,q}^k(t,X,a_I))^2\exp(-\gamma t),
\end{eqnarray*}
where $\omega$ is replaced by $a_I$.
\\ Replacing $x_n$ by $b_{I-1}$ in the previous inequality, we obtain
\begin{eqnarray*}
& &\sum_{k=1}^m(e_{I,q}^k(t,X,b_{I-1}))^2\exp(\beta(b_{I-1}-a_I)-\gamma t)\\\nonumber
&\leq& \max_{(t,X)\in[0,T]\times[-l,l]^{n-1}}\sum_{k=1}^m(e_{I,q}^k(t,X,a_I))^2\exp(-\gamma t).
\end{eqnarray*}
Since $e_{I,q}^k(t,X,b_{I-1})$ is equal to $e_{I-1,q+1}^k(t,X,b_{I-1})$, then
\begin{eqnarray*}
& &\sum_{k=1}^m(e_{I-1,q+1}^k(t,X,b_{I-1}))^2\exp(\beta(b_{I-1}-a_I)-\gamma t)\\\nonumber
&\leq& \max_{(t,X)\in[0,T]\times[-l,l]^{n-1}}\sum_{k=1}^m(e_{I,q}^k(t,X,a_I))^2\exp(-\gamma t).
\end{eqnarray*}
We define $\beta_1$ to be $\sqrt\frac{\gamma}{2}$ and let $\beta$ in this case be $\beta_1$; then if we choose $\gamma$ large, $\gamma-\beta^2$ is large, the inequality becomes
\begin{eqnarray*}
& &\sum_{k=1}^m(e_{I-1,q+1}^k(t,X,b_{I-1}))^2\exp(-\gamma t)\\\nonumber
&\leq& \exp(-\beta_1 S_{I-1})\max_{(t,X)\in[0,T]\times[-l,l]^{n-1}}\sum_{k=1}^m(e_{I,q}^k(t,X,a_I))^2\exp(-\gamma t).
\end{eqnarray*}
We deduce that
\begin{equation}
\label{4e12}
\sum_{k=1}^m(e_{I-1,q+1}^k(t,X,b_{I-1}))^2\exp(-\gamma t)\leq \exp(-\beta_1S_{I-1})E_q.
\end{equation}
\h Moreover, on the $(I-1)$-th domain, at the $(q+1)$-th step, $(\ref{4e9})$ leads to
\begin{eqnarray*}
& &\sum_{k=1}^m(e_{I-1,q+1}^k(t,X,x_n))^2\exp(\beta(x_n-a_{I-1})-\gamma t)\\\nonumber
&\leq& \max\left\{\max_{(t,X)\in[0,T]\times[-l,l]^{n-1}}\sum_{k=1}^m(e_{I-1,q+1}^k(t,X,a_{I-1}))^2\exp(-\gamma t),\right.\\\nonumber
& &\left.\max_{(t,X)\in[0,T]\times[-l,l]^{n-1}}\sum_{k=1}^m(e_{I-1,q+1}^k(t,X,b_{I-1}))^2\exp(\beta(b_{I-1}-a_{I-1})-\gamma t)\right\},
\end{eqnarray*}
where $\omega$ is replaced by $a_{I-1}$.
\\ Since $e_{I-1,q+1}^k(t,X,b_{I-2})$ is equal to $e_{I-2,q+2}^k(t,X,b_{I-2})$, then
\begin{eqnarray*}
& &\sum_{k=1}^m(e_{I-2,q+2}^k(t,X,b_{I-2}))^2\exp(\beta(b_{I-2}-a_{I-1})-\gamma t)\\\nonumber
&\leq& \max\left\{\max_{(t,X)\in[0,T]\times[-l,l]^{n-1}}\sum_{k=1}^m(e_{I-1,q+1}^k(t,X,a_{I-1}))^2\exp(-\gamma t),\right.\\\nonumber
& &\left.\max_{(t,X)\in[0,T]\times[-l,l]^{n-1}}\sum_{k=1}^m(e_{I-1,q+1}^k(t,X,b_{I-1}))^2\exp(\beta L_{I-1}-\gamma t)\right\};
\end{eqnarray*}
thus
\begin{eqnarray*}
& &\sum_{k=1}^m(e_{I-2,q+2}^k(t,X,b_{I-2}))^2\exp(\beta S_{I-2}-\gamma t)\\\nonumber
&\leq& \max\left\{\max_{(t,X)\in[0,T]\times[-l,l]^{n-1}}\sum_{k=1}^m(e_{I-1,q+1}^k(t,X,a_{I-1}))^2\exp(-\gamma t),\right.\\\nonumber
& &\left.\max_{(t,X)\in[0,T]\times[-l,l]^{n-1}}\sum_{k=1}^m(e_{I-1,q+1}^k(t,X,b_{I-1}))^2\exp(\beta L_{I-1}-\gamma t)\right\}.
\end{eqnarray*}
Combining this with $(\ref{4e12})$ and the fact that $$\max_{(t,X)\in[0,T]\times[-l,l]^{n-1}}\sum_{k=1}^m(e_{I-1,q+1}^k(t,X,a_{I-1}))^2\exp(-\gamma t)\leq E_{q+1},$$ we obtain that   
$$\sum_{k=1}^m(e_{I-2,q+2}^k(t,X,b_{I-2}))^2\exp(\beta S_{I-2}-\gamma t)\leq\max\{E_{q}\exp(\beta L_{I-1}-\beta_1S_{I-1}),E_{q+1}\}.$$
Thus
\begin{eqnarray*}
\sum_{k=1}^m(e_{I-2,q+2}^k(t,X,b_{I-2}))^2\exp(-\gamma t)& \leq&\max\{E_{q}\exp(\beta (L_{I-1}-S_{I-2})-\beta_1S_{I-1}),\\
& &E_{q+1}\exp(-\beta S_{I-2})\}.
\end{eqnarray*}
Defining $\beta_2$ to be $\beta_1\frac{S_{I-1}}{L_{I-1}}$ and choosing $\beta$ to be $\beta_2$ such that $$\beta_2(-L_{I-1}+S_{I-2})+\beta_1S_{I-1}=\beta_2S_{I-2},$$ we infer 
\begin{equation}
\label{4e13}
\sum_{k=1}^m(e_{I-2,q+2}^k(t,X,b_{I-2}))^2\exp(-\gamma t)\leq\max\{E_{k},E_{k+1}\}\exp(-\beta_2S_{I-2}).
\end{equation}
\h Using the same techniques as the ones that we use to achive $(\ref{4e12})$ and $(\ref{4e13})$, we can prove that
\begin{equation}
\label{4e14}
\sum_{k=1}^m(e_{I-j,q+j}^k(t,X,b_{I-j}))^2\exp(-\gamma t)\leq\max\{E_{k},\dots,E_{k+j-1}\}\exp(-\beta_jS_{I-j}), 
\end{equation}
where $$\beta_j=\beta_1\frac{S_{I-1}}{L_{I-1}}\dots\frac{S_{I-j+1}}{L_{I-j+1}},~~j\in\{2,\dots,I-1\}.$$
\\ {\h\bf Step 2.2:} Estimate of the left boudaries of the sub-domains
\\\h Consider the $1$-th domain, at the $k$-th step. Then $(\ref{4e10})$ infers that
\begin{eqnarray*}
& &\sum_{k=1}^m(e_{1,q}^k(t,X,x_n))^2\exp(\beta(x_n-b_1)-\gamma t)\\\nonumber
&\leq& \max_{(t,X)\in[0,T]\times[-l,l]^{n-1}}\sum_{k=1}^m(e_{1,q}^k(t,X,b_1))^2\exp(-\gamma t),
\end{eqnarray*}
we notice here that $\omega$ is replaced by $b_1$.
\\ Replace $x_n$ by $a_{2}$, we obtain that
\begin{eqnarray*}
& &\sum_{k=1}^m(e_{1,q}^k(t,X,a_2))^2\exp(\beta(a_2-b_1)-\gamma t)\\\nonumber
&\leq& \max_{(t,X)\in[0,T]\times[-l,l]^{n-1}}\sum_{k=1}^m(e_{1,q}^k(t,X,b_1))^2\exp(-\gamma t),
\end{eqnarray*}
Since $e_{1,q}^k(t,X,a_2)$ is equal to $e_{2,q+1}^k(t,X,a_2)$,
\begin{eqnarray*}
& &\sum_{k=1}^m(e_{2,q+1}^k(t,X,a_2))^2\exp(-\gamma t)\\\nonumber
&\leq& \max_{(t,X)\in[0,T]\times[-l,l]^{n-1}}\sum_{k=1}^m(e_{1,q}^k(t,X,b_1))^2\exp(-\gamma t)\exp(\beta S_{1}),
\end{eqnarray*} 
We define $\beta_1'$ to be $-\sqrt\frac{\gamma}{2}$ and let $\beta$ be $\beta_1'$ in this case. If we choose $\gamma$ large, $\gamma-\beta^2$ is large. The inequality becomes
\begin{eqnarray*}
& &\sum_{k=1}^m(e_{2,q+1}^k(t,X,a_2))^2\exp(-\gamma t)\\\nonumber
&\leq& \exp(-\beta_1' S_{1})\max_{(t,X)\in[0,T]\times[-l,l]^{n-1}}\sum_{k=1}^m(e_{1,q}^k(t,X,b_1))^2\exp(-\gamma t).
\end{eqnarray*} 
We deduce
\begin{equation}
\label{4e15}
\sum_{k=1}^m(e_{2,q+1}^k(t,X,a_2))^2\exp(-\gamma t)\leq \exp(-\beta'_1S_1)E_q.
\end{equation}
\h Moreover, on the $2$-th domain, at the $(q+1)$-th step, $(\ref{4e9})$ leads to
\begin{eqnarray*}
& &\sum_{k=1}^m(e_{2,q+1}^k(t,X,x_n))^2\exp(\beta(x_n-a_2)-\gamma t)\\\nonumber
&\leq& \max\left\{\max_{(t,X)\in[0,T]\times[-l,l]^{n-1}}\sum_{k=1}^m(e_{2,q+1}^k(t,X,a_2))^2\exp(-\gamma t),\right.\\\nonumber
& &\left.\max_{(t,X)\in[0,T]\times[-l,l]^{n-1}}\sum_{k=1}^m(e_{2,q+1}^k(t,X,b_2))^2\exp(\beta(b_2-a_2)-\gamma t)\right\},
\end{eqnarray*}
notice that $\omega$ is replaced by $a_2$.
\\ Since $e_{2,q+1}^k(t,X,a_3)$ is equal to $e_{3,q+2}^k(t,X,a_3)$, then
\begin{eqnarray*}
& &\sum_{k=1}^m(e_{3,q+2}^k(t,X,a_3))^2\exp(\beta(a_3-a_2)-\gamma t)\\\nonumber
&\leq& \max\left\{\max_{(t,X)\in[0,T]\times[-l,l]^{n-1}}\sum_{k=1}^m(e_{2,q+1}^k(t,X,a_2))^2\exp(-\gamma t)\right.,\\\nonumber
& &\left.\max_{(t,X)\in[0,T]\times[-l,l]^{n-1}}\sum_{k=1}^m(e_{2,q+1}^k(t,X,b_2))^2\exp(\beta(b_2-a_2)-\gamma t)\right\}.
\end{eqnarray*}
Hence
\begin{eqnarray*}
& &\sum_{k=1}^m(e_{3,q+2}^k(t,X,a_3))^2\exp(-\gamma t)\\\nonumber
&\leq&\exp(-\beta(L_2-S_2)) \max\left\{\max_{(t,X)\in[0,T]\times[-l,l]^{n-1}}\sum_{k=1}^m(e_{2,q+1}^k(t,X,a_2))^2\exp(-\gamma t),\right.\\\nonumber
& &\left.\max_{(t,X)\in[0,T]\times[-l,l]^{n-1}}\sum_{k=1}^m(e_{2,q+1}^k(t,X,b_2))^2\exp(\beta L_2-\gamma t)\right\}.
\end{eqnarray*}
Combining this inequality, $(\ref{4e15})$ and the fact that $$\sum_{k=1}^m(e_{2,q+1}^k(t,X,b_2))^2\exp(-\gamma t)\leq E_{q+1},$$  we deduce   
\begin{eqnarray*}
& &\sum_{k=1}^m(e_{3,q+2}^k(t,X,a_3))^2\exp(-\gamma t)\\\nonumber
&\leq&\exp(-\beta(L_2-S_2)) \max\{\exp(-\beta'_1S_1)E_q,\exp(\beta L_2)E_{q+1}\},
\end{eqnarray*}
Define $\beta_2'$ to be $\beta'_1\frac{S_1}{L_{2}}$ and choose $\beta$ to be $-\beta_2'$, we  infer that $$-\beta(L_2-S_2)-\beta'_1S_1=\beta S_2.$$ This implies
\begin{equation}
\label{4e16}
\sum_{k=1}^m(e_{3,q+2}^k(t,X,a_3))^2\exp(-\gamma t)\leq\max\{E_{k},E_{k+1}\}\exp(-\beta'_2S_2).
\end{equation}
\h Using the same techniques as the ones that we use to achive $(\ref{4e15})$ and $(\ref{4e16})$, we can prove that
\begin{equation}
\sum_{k=1}^m(e_{j,q+j-1}^k(t,X,a_j))^2\exp(-\gamma t)\leq\max\{E_{k},\dots,E_{k+j-2}\}\exp(-\beta_{j-1}'S_{j-1}), 
\label{4e17}
\end{equation}
where $$\beta_j'=\beta_1'\frac{S_1}{L_{2}}\dots\frac{S_{j-1}}{L_{j}}, ~~j\in\{2,\dots,I-1\}.$$
\\ {\h\bf Step 2.3:} Convergence result
\\\h Setting $$\bar\epsilon=\sqrt \frac{\gamma}{2}\frac{S_1\dots S_{I-1}}{L_2\dots L_{I-1}},$$ and
$$\bar E_k=\max_{j\in\{0,\dots,I-1\}}\{E_{k+j}\},$$
we infer from $(\ref{4e14})$ and $(\ref{4e17})$ that
$$\bar E_{k+1}\leq \bar E_k\exp(-\bar\epsilon),\forall k\in\mathbb{N}.$$
Therefore
$$\bar E_{n}\leq \bar E_0\exp(-n\bar\epsilon),\forall n\in\mathbb{N},$$
\\\h Hence $E_k$ tends to $0$ as $k$ tends to infinity. Which gives that
$$\lim_{q\to\infty}\max_{p=\overline{1,I}}\sum_{k=1}^m||e_{p,q}^k||_{C([0,T]\times\mathbb{R}^{n-1}\times[a_p,b_p])}=0,$$
and this concludes the proof.
\subsection{Proof of Theorem $\ref{2t6}$}
\h We start proving that $$\mathop{\lim}_{l\to\infty}\mathop{\lim}_{q\to\infty}\int_0^TE(|X_t^{q,l}-X_t|^2)dt=0.$$
\\ Subtracting $(\ref{2e9})$ and $(\ref{2e13})$, we get
\begin{eqnarray}
\label{4e18}
X_t-X_t^{q,l}&=&\int_0^t[b(s,X_s,\theta(s,X_s))-b(s,X_s^{q,l},\theta^{q,l}(s,X^{q,l}))]ds\\\nonumber
& &+\int_0^t[\sigma((s,X_s,\theta(s,X_s))-\sigma(s,X_s^{q,l},\theta^{q,l}(s,X^{q,l}_s))]dW_s,
\end{eqnarray}
which leads to
\begin{eqnarray}
\label{4e19}
|X_t-X_t^{q,l}|^2&\leq&2\left(\int_0^t[b(s,X_s,\theta(s,X_s))-b(s,X_s^{q,l},\theta^{q,l}(s,X^{q,l}_s))]ds\right)^2\\\nonumber
& &+2\left(\int_0^t[\sigma(s,X_s,\theta(s,X_s))-\sigma(s,X_s^{q,l},\theta^{q,l}(s,X^{q,l}_s))]dW_s\right)^2.
\end{eqnarray}
A simple calculation gives
\begin{eqnarray}
\label{4e20}
E(|X_t-X_t^{q,l}|^2)&\leq&N_5E\int_0^t|b(s,X_s,\theta(s,X_s))-b(s,X_s^{q,l},\theta^{q,l}(s,X^{q,l}_s))|^2ds\\\nonumber
& &+N_5E\int_0^t|\sigma(s,X_s,\theta(s,X_s))-\sigma(s,X_s^{q,l},\theta^{q,l}(s,X^{q,l}_s))|^2ds\\\nonumber
&\leq&N_6E\int_0^t[|X_s-X_s^{q,l}|^2+|\theta(s,X_s)-\theta^{q,l}(s,X_s^{q,l})|^2]ds\\\nonumber
&\leq&N_6E\int_0^t|X_s-X_s^{q,l}|^2ds\\\nonumber
& +&N_7E\int_0^t[|\theta(s,X_s^{q,l})-\theta^{q,l}(s,X_s^{q,l})|^2+|\theta(s,X_s)-\theta(s,X_s^{q,l})|^2]ds\\\nonumber
&\leq&N_8E\int_0^t[|X_s-X_s^{q,l}|^2+|\theta(s,X_s^{q,l})-\theta^{q,l}(s,X_s^{q,l})|^2]ds,
\end{eqnarray}
where $N_5$, $N_6$, $N_7$, $N_8$ are positive constants.
\\ Since $\{\theta^{q,l}\}$ converges uniformly to $\theta^l$, and $\{\theta^{l}\}$ converges uniformly to $\theta$, then for all positive number $\epsilon$, there exists $Q(\epsilon)$ such that
 $$|\theta(s,X_s^{q,l})-\theta^{q,l}(s,X_s^{q,l})|<\sqrt\epsilon, \forall q,l>Q(\epsilon).$$
 Inequality $(\ref{4e20})$ leads to
\begin{eqnarray}
\label{4e21}
E(|X_t-X_t^{q,l}|^2)&\leq&N_8E\int_0^t[|X_s-X_s^{q,l}|^2+\epsilon]ds,~~\forall q,l>Q(\epsilon).
\end{eqnarray}
Now, setting $$H(t)=\int_0^tE(|X_s-X_s^{q,l}|^2)ds,$$ we obtain 
\begin{eqnarray}
\label{4e22}
H'(t)\leq N_9H(t)+N_9\epsilon,
\end{eqnarray}
where $N_9$ is a positive constant.
Therefore $$H'(t)\exp(-N_9t)-N_9H(t)\exp(-N_9t)- N_9\epsilon\exp(-N_9t)\leq 0.$$ This implies $$(H(t)\exp(-N_9t)+\epsilon\exp(-N_9t))'\leq 0,$$ and this inequality then leads to $$H(t)\exp(-N_9t)+\epsilon\exp(-N_9t)\leq \epsilon.$$ Consequently, $$H(t)\leq\epsilon(\exp(N_9T)-1)$$ for $q,l$ greater than $Q(\epsilon)$, which leads to $$\mathop{\lim}_{l\to\infty}\mathop{\lim}_{q\to\infty}\int_0^TE(|X_t-X_t^{q,l}|^2)dt=0.$$
\\\h We next prove that $$\mathop{\lim}_{l\to\infty}\mathop{\lim}_{q\to\infty}\int_0^TE(|Y_t^{q,l}-Y_t|^2)dt=0.$$
\\ The fact that $$\mathop{\lim}_{l\to\infty}\mathop{\lim}_{q\to\infty}\int_0^TE(|X_t-X_t^{q,l}|^2)dt=0$$ implies that $X^{q,l}_t$ converges to $X_t$ almost everywhere. From which, we can infer that $\theta^{q,l}(t,X^{q,l}_t)$ converges to $\theta(t,X_t)$ almost everywhere. Since $\{\theta^{q,l}(t,X^{q,l}_t)\}$ is bounded, the Lebesgue Dominated Convergence Theorem gives that $$\mathop{\lim}_{l\to\infty}\mathop{\lim}_{q\to\infty}E\int_t^T|Y_s^{q,l}-Y_s|^2ds=0,$$ and then $$\mathop{\lim}_{l\to\infty}\mathop{\lim}_{q\to\infty}\int_0^TE(|Y_t^{q,l}-Y_t|^2)dt=0.$$
\\\h Now, we prove that $$\mathop{\lim}_{l\to\infty}\mathop{\lim}_{q\to\infty}E\int_0^T|Z_t^{q,l}-Z_t|^\frac{2}{\alpha}dt=0.$$ 
\\ We get the following equation from the definition of $Y_t$ and $Y_t^{q,l}$,
\begin{eqnarray}
\label{4e23}
Y_t-Y_t^{q,l}&=&\int_t^T[\hat{b}(s,X_s,Y_s)-\hat{b}(s,X_s^{q,l},Y^{q,l}_s)]ds\\\nonumber
& &+\int_t^T[\hat{\sigma}(s,X_s,Y_s,Z_s)-\hat{\sigma}(s,X_s^{q,l},Y_s^{q,l},Z_s^{q,l})]dW_s.
\end{eqnarray}
This implies that
\begin{eqnarray}
\label{4e24}
& & E\int_t^T[\hat{\sigma}(s,X_s,Y_s,Z_s)-\hat{\sigma}(s,X_s,Y_s,Z_s^{q,l})]^2ds\\\nonumber
&\leq&N_{10}\left(E(|Y_t-Y_t^{q,l}|^2)+E\int_t^T[\hat{b}(s,X_s,Y_s)-\hat{b}(s,X_s^{q,l},Y^{q,l}_s)]^2ds\right)\\\nonumber
&\leq&N_{11}\left(E(|Y_t-Y_t^{q,l}|^2)+E\int_t^T[|X_s-X_s^{q,l}|^2+|Y_s-Y_s^{q,l}|^2]ds\right),
\end{eqnarray}
where $N_{10}$, $N_{11}$ are positive constants.
\\ Applying Condition $(A3)$ to the inequality $(\ref{4e24})$, we get
 \begin{eqnarray}
\label{4e25}
& & E\int_t^T|Z_s-Z_s^{q,l}|^\frac{2}{\alpha}ds\\\nonumber
&\leq&N_{12}\left(E(|Y_t-Y_t^{q,l}|^2)+E\int_t^T[|X_s-X_s^{q,l}|^2+|Y_s-Y_s^{q,l}|^2]ds\right),
\end{eqnarray}
where $N_{12}$ is a positive constant.
\\ Due to the fact that $$\mathop{\lim}_{l\to\infty}\mathop{\lim}_{q\to\infty}\int_0^TE(|X_t-X_t^{q,l}|^2)dt=0,$$ and $$\mathop{\lim}_{l\to\infty}\mathop{\lim}_{q\to\infty}E\int_0^T|Y_t-Y_t^{q,l}|^2dt=0,$$ we can see that the RHS of $(\ref{4e25})$ tends to $0$ as $q$ and $l$ tend to infinity. Hence $E\int_0^T|Z_t-Z_t^{q,l}|^\frac{2}{\alpha}dt$ tends to $0$ as $q$ and $l$ tend to infinity. This concludes the proof.
\section{Conclusion}
We have introduced a new Domain Decomposition Method for a system of SDEs. The method has been studied theoretically and proved to be well-posed and stable. We have also proposed a new technique to prove the convergence of Domain Decomposition Methods for systems of nonlinear parabolic equations in $n$-dimension. The method has the potential to be used to prove the convergence of Domain Decomposition Methods for many kinds of nonlinear problems.
\\ {\bf Acknowledgements.} The author is grateful to Professor Laurence Halpern for her encouragement. He would also like to thank Dr Eulalia Nualart for her kindness and careful reading of the manuscript.
\bibliographystyle{plain}\bibliography{DDMforFBSDE}
\end{document}